\renewcommand{\le}{\leqslant}
\renewcommand{\ge}{\geqslant}
\newcommand{\Lquote}[1]{``#1"}
\newcommand {\pnorm}[1]   {\left\lVert #1 \right\rVert}
\newcommand {\pabs}[1]    {\left\lvert #1 \right\rvert}
\newcommand{\Mdemi}{\frac{1}{2}}
\newcommand{\qtext}[1]{\quad\text{#1}}
\newcommand{\SB}{\backslash}
\newcommand{\Tab}{\text{ab}}
\DeclareMathOperator {\Mrec}{rec}
\DeclareMathOperator {\Msgn}  {sgn}
\DeclareMathOperator {\Mrang}{rang}
\DeclareMathOperator {\Mdim} {dim}
\DeclareMathOperator {\Mvol} {vol}
\DeclareMathOperator {\MGal} {Gal}
\DeclareMathOperator {\MCl}  {Cl}
\DeclareMathOperator {\MEnd} {End}
\DeclareMathOperator {\MPic} {Pic}
\DeclareMathOperator {\Mim}  {\Im m}
\DeclareMathOperator {\MRes} {Res}
\newcommand{\Mdede}[4]{
\begin{pmatrix}
#1&#2  \\
#3&#4 \\
\end{pmatrix}
}
\newcommand{\BmA}{\mathbb{A}}
\newcommand{\BmC}{\mathbb{C}}
\newcommand{\BmD}{\mathbb{D}}
\newcommand{\BmG}{\mathbb{G}}
\newcommand{\BmP}{\mathbb{P}}
\newcommand{\BmQ}{\mathbb{Q}}
\newcommand{\BmR}{\mathbb{R}}
\newcommand{\BmS}{\mathbb{S}}
\newcommand{\BmZ}{\mathbb{Z}}
\newcommand{\CmH}{\mathcal{H}}
\newcommand{\CmI}{\mathcal{I}}
\newcommand{\CmL}{\mathcal{L}}
\newcommand{\CmN}{\mathcal{N}}
\newcommand{\CmO}{\mathcal{O}}
\newcommand{\CmR}{\mathcal{R}}
\newcommand{\CmS}{\mathcal{S}}
\newcommand{\SmA}{\mathscr{A}}
\newcommand{\FmH}{\mathfrak{H}}
\theoremstyle{plain}
\newtheorem{theorem}{Théorème}
\newtheorem{proposition}{Proposition}
\newtheorem{lemma}{Lemme}
\newtheorem{definition}{Définition}
\theoremstyle{remark}
\newtheorem{remark}{Remarque}
\newtheorem{example}{Exemple}
\newcommand{\Mclgr}{\widehat{\MCl}}
\DeclareMathOperator{\Mht}{ht}
\newtheorem{hypothese}{Condition}
\begin{document}
\title[]{Sur le rang des courbes elliptiques sur les corps de classes de Hilbert}
\author[]{Nicolas Templier}
%
%
\address{Institute for Advanced Study, Princeton NJ-08540}
\email{ntemplier@ias.edu}

%
\dedicatory{\Lquote{ces brouilleries inexplicables}}
\date{\today}
\keywords{automorphic forms, equidistribution, $L$-functions, Heegner points, elliptic curves}
\subjclass[2000]{11G05,14G40,11G40,11G15}
\maketitle
\begin{abstract} Let $E/\BmQ$ be an elliptic curve and $D<0$ a sufficiently large fundamental discriminant. If $E(\overline{\BmQ})$ contains Heegner points of discriminant $D$, those points generate a subgroup of rank at least $\pabs{D}^{\delta}$, where $\delta>0$ is an absolute constant. This result is compatible with the Birch and Swinnerton-Dyer conjecture.\\

\noindent {\bf R\'esum\'e.} Soit $E/\BmQ$ une courbe elliptique. Soit $D<0$ un discriminant fondamental suffisamment grand. Si $E(\overline{\BmQ})$ contient des points de Heegner de discriminant $D$, ces points engendrent un sous-groupe dont le rang est supérieur à $\pabs{D}^{\delta}$, où $\delta>0$ est une constante absolue. Ce résultat est en accord avec la conjecture de Birch et Swinnerton-Dyer.
\end{abstract}
\tableofcontents
\section{Introduction et énoncé des résultats.}\label{sec:intro}
\subsection{Points algébriques sur les courbes elliptiques.}\label{sec:intro:points}
Soit $E$ une courbe elliptique définie sur le corps des rationnels $\BmQ$. Pour un corps de nombres $M$, le groupe abélien $E(M)$ des points rationnels de $E$ définis sur $M$ est de type fini par le théorème de Mordell-Weil. De nombreux travaux portent sur l'étude de ce groupe et beaucoup de questions demeurent largement ouvertes, notamment la conjecture de Birch et Swinnerton-Dyer.

Un outil fondamental dans l'étude de $E(M)$ est la construction de \Lquote{points spéciaux}. La théorie de la multiplication complexe permet de construire les points de Heegner qui sont définis sur les corps de classes des corps quadratiques imaginaires. Pour $D<0$ discriminant fondamental, on notera $H_D$ le corps de classes de Hilbert de $\BmQ(\sqrt{D})$. Rappelons que la formule de B.~Gross et D.~Zagier~\cite{GZ} et la méthode de descente de V.~Kolyvagin~\cites{Kolyvagin:finiteness,Kolyvagin:Mordell-Weil} font appels à ces points de Heegner pour montrer le résultat suivant. Lorsque l'ordre d'annulation de $L(s,E)$ en $s=1/2$ est au plus $1$, il est égal au rang du groupe $E(\BmQ)$.

Dans cet article, on s'intéressera au groupe $E(H_D)$ et à la question de savoir si les points de Heegner sont linéairement indépendants ou si au contraire ils auraient tendance à \Lquote{s'aligner}. Une autre fa\c{c}on de formuler le problème consiste à demander quelle est la taille relative du sous-groupe de $E(H_D)$ engendré par les points de Heegner.

Considérons l'énoncé quantitatif suivant. Il existe une constante absolue $\delta>0$ telle que
\begin{equation}\label{intro:in}
 \Mrang E(H_D) > |D|^\delta
\end{equation}
lorsque $\pabs{D}$ est suffisamment grand et satisfait une condition de compatibilité. On peut faire les remarques suivantes. D'une part cet énoncé serait conséquence de la conjecture de Birch et Swinnerton-Dyer qui dit essentiellement que $\delta\approx 1/2$ serait admissible (voir \S~\ref{sec:intro:dich}). D'autre part Ph.~Michel et A.~Venkatesh~\cite{MV05} ont montré dans un cas particulier (condition de Heegner) qu'il serait conséquence d'une hypothèse quantitative sur les petits premiers décomposés dans $\BmQ(\sqrt{D})$. Enfin il serait conséquence de certaines heuristiques sur la non-annulation de valeurs critiques de fonctions $L$ comme annoncé dans \cite{cong:ICM06:MV}*{section 2.4}. Étant donné ce faisceau convergent d'indices, on peut donc espérer que l'énoncé soit vrai!

Dans ce papier on démontre l'inégalité~\eqref{intro:in} inconditionnellement en suivant la troisième approche (voir le Théorème~\ref{th:main} pour un énoncé précis). La condition de compatibilité~\eqref{eq:th:main:sgn}, qui est équivalente à l'existence de points de Heegner de discriminants $D$ sur $E$, est essentiellement \emph{optimale}, voir le \S~\ref{sec:intro:dich}. On atteint ce degré de généralité par une nouvelle idée simple qui est suffisamment robuste pour s'adapter à toutes les situations.

Une variante intéressante consiste à considérer des corps de classes ramifiés. Ce cas a été étudié en détail pour son interaction avec la théorie d'Iwasawa et un résultat important est le suivant. En réponse à une conjecture de B.~Mazur, V.~Vatsal~\cite{Vats03}*{Theorem 1.4} et C.~Cornut~\cites{Corn02,Cornut:cras} ont montré que si l'on \emph{fixe} un discriminant $D$ et un nombre premier $p$ et que le corps $M\supset H_D$ est le corps de classes de $\BmQ(\sqrt{D})$ de conducteur $p^n$, le rang de $E(M)$ est $\gg_{D,p} p^n$ lorsque $n$ est assez grand. Ce résultat a été étendu aux corps de classes des corps totalement réels~\cites{CV05, CV05:partII} et au cas où le conducteur de $M$ est concentré en un nombre fini et fixe de places~\cite{AN08} par la même méthode. Le lecteur peut également consulter avec profit les survols~\cites{cong:heeg04:vats,book:Nekovar:asterisque,cong:ICM06:vats}.

Il y a des analogies profondes entre la démonstration du théorème de Cornut-Vatsal et la démonstration du Théorème~\ref{th:main}. On peut trouver une discussion lucide d'un tel rapprochement dans l'introduction de~\cite{MV05}. Mentionnons que la question d'unifier ces deux théorèmes est intéressante et que nous y reviendrons dans un article futur.

Récemment, A.~Buium et B.~Poonen~\cite{BP07} ont démontré un résultat qui a également trait à l'indépendance des points de Heegner sur les courbes elliptiques. Leur résultat admet le corollaire suivant: un sous-groupe \Lquote{de rang fini} $\Gamma\subset E(\overline{\BmQ})$, ne contient pas de point de Heegner de discriminant $D$ lorsque $\pabs{D}$ est suffisamment grand (en fonction de $\Gamma$). Dans la terminologie de~\cite{BP07}, un groupe est de rang fini si la dimension sur $\BmQ$ de $\Gamma \otimes_\BmZ \BmQ$ est finie. Ce corollaire améliore un résultat antérieur de N.~Schapparer et J.~Nekovar~\cite{NS99} qui montrent que lorsque $\pabs{D}$ est suffisamment grand, les points de Heegner de discriminant $D$ ne sont pas de torsion. Il existe également un résultat conditionnel d'indépendance dû à M.~Rosen et J.~Silverman~\cite{RS07} qui est de nature différente.

Rappelons finalement que les travaux \cites{BFH90,Murty90:mean,Iwan90:vanishing} montrent qu'il existe une infinité de discriminants fondamentaux $D$ tels que $E(\BmQ(\sqrt{D}))$ soit infini, et que ce fait constitue une étape dans la démonstration du théorème de Kolyvagin mentionné plus haut.

\subsection{Une dichotomie.}\label{sec:intro:dich} Soit $E/\BmQ$ une courbe elliptique et $D$ un discriminant fondamental. La conjecture de Birch et Swinnerton-Dyer (BSD) admet des conséquences profondes quant au rang de $E(H_D)$. On va voir qu'asymptotiquement deux cas bien distincts devraient se produire. 

On note $\MCl_D$ le groupe des classes d'idéaux de $\BmQ(\sqrt{D})$, et $\Mclgr_D$ son dual. Le cardinal de $\MCl_D$ est le nombre de classes $h(D)$. On utilisera dans ce paragraphe librement certaines propriétés connues des fonctions $L$, le lecteur peut trouver plus de détails dans la suite et dans le survol~\cite{book:darm04}.

Comme l'extension $H_D/\BmQ$ est \Lquote{dihédrale} (au sens où son groupe de Galois est une extension de $\BmZ/2\BmZ$ par le groupe abélien $\MCl_D$), la fonction $L$ de $E$ sur $H_D$ se factorise en:
\begin{equation}
  L(s,E\otimes_\BmQ H_D)=
  \prod_{\chi\in \Mclgr_D}
  L(s,E\times \chi).
\end{equation}

Ici $E\times \chi$ est une représentation automorphe autoduale que l'on peut construire par convolution de Rankin-Selberg. Dans tout le texte on fait l'hypothèse simplificatrice suivante:
\begin{equation}\label{S}\tag{S}
\begin{aligned}
 \parbox{7cm}{\emph{Le signe de l'équation fonctionnelle de $L(s,E\times \chi)$ est indépendant de $\chi\in\Mclgr_D$.}}
\end{aligned}
\end{equation}
Cette hypothèse est relativement faible, et vérifiée dans la plupart des cas. Toutefois on met en garde le lecteur sur le fait qu'elle n'est pas toujours vérifiée.

Alors ce signe ne dépend que de $E$ et de $D$, on le note $\Msgn(E,D)\in\{\pm 1\}$. C'est par exemple le signe de l'équation fonctionnelle de la fonction $L $ du changement de base $E\otimes_\BmQ \BmQ(\sqrt{D})$. Selon la valeur de $\Msgn(E,D)$ on a deux situations bien distinctes:

{\bf $1^{\text{er}}$ cas.} Lorsque $\Msgn(E,D)=-1$, il est clair que 
 $L(1/2,E\times \chi)=0$ pour tout $\chi \in \Mclgr_D$. Ainsi l'ordre d'annulation de $L(s,E\otimes_\BmQ H_D)$ est au moins $h(D)$. Admettant la conjecture BSD on aurait donc:
\begin{equation}
 \Mrang E(H_D) \ge h(D) \gg_\epsilon \pabs{D}^{1/2-\epsilon}, \qtext{pour tout $\epsilon>0$.}
\end{equation}
La deuxième inégalité est le théorème de Siegel~\cite{Sieg:classnumber}.

{\bf $2^{\text{nd}}$ cas.} Lorsque $\Msgn(E,D)=+1$ on ne peut pas tirer de conclusion immédiate quant à l'annulation de $L(1/2,E\times \chi)$. Les conjectures de type Katz-Sarnak~\cite{book:KS} suggèrent que la fonction $L(s,E\times \chi)$ ne s'annule pas \Lquote{en général} au point critique $1/2$. Il serait long de préciser quantitativement cette heuristique (formalisme des petits zéros de familles de fonctions $L$). Il est difficile de formuler une conjecture précise (borné? croissance au plus logarithmique?), mais on s'attend clairement à ce que le rang de $E(H_D)$ soit significativement plus petit que dans le $1^{\text{er}}$ cas.

\subsection{Résultat principal.}\label{sec:intro:resultat} Dans cet article on démontre le résultat quantitatif suivant qui va dans la direction de la conjecture de Birch et Swinnerton-Dyer en établissant une version faible de la conclusion du $1^{\text{er}}$ cas ci-dessus. \'Etant donnée la dichotomie, la condition~\eqref{eq:th:main:sgn} est optimale.
\begin{theorem}\label{th:rang}\label{th:main}
 Il existe un réel $\delta >0$ tel que l'on ait, pour toute courbe elliptique $E$ définie sur $\BmQ$, l'inégalité
  \begin{equation}\label{principal}
    \Mrang E(H_D) > |D|^\delta
  \end{equation}
quel que soit le discriminant fondamental négatif $D$ qui est assez grand et qui satisfait~\eqref{S} et
\begin{equation}\label{eq:th:main:sgn}
 \Msgn (E,D)=-1.
\end{equation}
\end{theorem}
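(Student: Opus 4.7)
\emph{Stratégie.} La preuve procède en deux temps : d'abord réduire la minoration~\eqref{principal} à un compte de caractères $\chi\in\Mclgr_D$ pour lesquels $L'(1/2, E\times\chi) \neq 0$, puis établir ce compte par un argument de premier moment couplé à une borne individuelle sous-convexe. Les ingrédients principaux sont la formule de Gross-Zagier-Zhang, la descente de Kolyvagin, la non-torsion des points de Heegner due à Nekovář-Schappacher~\cite{NS99}, et la sous-convexité dans l'aspect $\chi$ fournie par Michel-Venkatesh.

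\emph{Réduction.} Sous les hypothèses~\eqref{S} et~\eqref{eq:th:main:sgn}, $L(1/2, E\times\chi)=0$ pour tout $\chi\in\Mclgr_D$. La formule de Gross-Zagier-Zhang donne
\[
 L'(1/2, E\times\chi) \;=\; c(E,D)\,\hat h(P_\chi), \qquad
 P_\chi \;:=\; \sum_{\sigma\in\Gal(H_D/K)} \chi(\sigma)^{-1} P^\sigma,
\]
où $P\in E(H_D)$ est un point de Heegner, $\hat h$ est la hauteur de Néron-Tate, et $c(E,D)>0$ est indépendant de $\chi$ (typiquement $c(E,D)\sim c_E/\sqrt{|D|}$). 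La descente de Kolyvagin assure alors que $P_\chi\neq 0$ implique $\dim_\BmC (E(H_D)\otimes_\BmZ\BmC)_\chi = 1$, d'où
\[
 \Mrang E(H_D) \;\ge\; \#\{\chi\in\Mclgr_D \,:\, L'(1/2, E\times\chi) \neq 0\}.
\]

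\emph{Moments.} L'orthogonalité des caractères et la Galois-invariance de $\hat h$ donnent
\[
 \sum_{\chi\in\Mclgr_D} \hat h(P_\chi) \;=\; h(D)^2\,\hat h(P).
\]
Combinée avec la minoration de Siegel $h(D)\gg|D|^{1/2-\epsilon}$ et une minoration $\hat h(P)\gg |D|^{-o(1)}$ issue de~\cite{NS99} et d'une analyse quantitative des constantes de Gross-Zagier, cette identité fournit $\sum_\chi L'(1/2, E\times\chi) \gg |D|^{1/2-\epsilon}$. Par ailleurs, la sous-convexité de $L(s, E\times\chi)$ dans l'aspect $\chi$, transférée à $L'$ via la formule de Cauchy sur un disque $|s-1/2|\le r$ avec $r\asymp 1$ (après Phragmén-Lindelöf pour justifier la borne dans une petite bande), donne $L'(1/2, E\times\chi) \ll |D|^{1/2-\eta+\epsilon}$ avec $\eta>0$ absolu. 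Par pigeonnier,
\[
 \#\{\chi : L'(1/2, E\times\chi)\neq 0\} \;\ge\; \frac{\sum_\chi L'(1/2, E\times\chi)}{\max_\chi L'(1/2, E\times\chi)} \;\gg\; |D|^{\eta - O(\epsilon)},
\]
ce qui établit~\eqref{principal} pour tout $\delta < \eta$.

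\emph{Principal obstacle.} La difficulté centrale est le contrôle effectif du premier moment, notamment l'obtention d'une minoration suffisamment uniforme de $\hat h(P)$ en fonction de $D$. Le théorème de Nekovář-Schappacher garantit $\hat h(P)>0$ à partir d'un certain $|D|$, mais sans borne quantitative explicite ; une décroissance polynomiale non maîtrisée en $|D|$ dégraderait fatalement l'exposant $\delta$ ou le rendrait dépendant de $E$. L'\og{}idée nouvelle et simple\fg{} annoncée dans l'introduction vise vraisemblablement à contourner cet écueil, par exemple via un premier moment mollifié, un argument d'amplification, ou l'exploitation de l'équidistribution des points de Heegner sur la courbe modulaire pour éviter toute minoration ponctuelle de hauteur.
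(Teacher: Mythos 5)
Votre schéma global coïncide avec celui du papier : sous l'hypothèse de signe $-1$, Gross-Zagier ramène le rang au nombre de $\chi$ pour lesquels $L'(1/2,E\times\chi)\neq 0$, on minore le premier moment, on majore individuellement par sous-convexité (Michel~\cite{Mich04}), et le tiroir conclut. Mais le c\oe ur de la preuve — la minoration uniforme de la hauteur d'un point de Heegner, Proposition~\ref{prop:height} — manque chez vous, et vous le reconnaissez d'ailleurs vous-même dans votre paragraphe \og Principal obstacle\fg.

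Deux remarques précises. D'abord, votre affirmation qu'une minoration $\hat h(P)\gg|D|^{-o(1)}$ serait \og issue de~\cite{NS99} et d'une analyse quantitative des constantes de Gross-Zagier\fg{} est incorrecte : Nekovář-Schappacher ne donne que $\hat h(P)>0$ (non-torsion) à partir d'un certain $|D|$, sans aucun contrôle quantitatif de la décroissance éventuelle, et l'inspection des constantes de Gross-Zagier ne fournit pas non plus une telle minoration. Le papier établit en fait un résultat plus fort, $\hat h(\varphi(z_D))\geq c>0$ avec $c$ ne dépendant que du morphisme $\varphi$, et c'est là la \og nouvelle idée simple\fg{} annoncée : si une sous-suite de hauteurs tendait vers $0$, le théorème de Szpiro-Ullmo-Zhang~\cite{SUZ97} forcerait l'équidistribution des orbites $\CmO(\varphi(z_D))$ vers la mesure de Haar $\nu$ de $E(\BmC)$ ; mais le théorème de Duke (dans sa forme générale~\cites{Duke88,DFI4,Zhan05}) implique que $\CmO(\varphi(z_D))$ s'équidistribue selon $\varphi_*\mu$ où $\mu$ est la mesure hyperbolique, et le Lemme~\ref{lem:singularitepointe} montre que $\varphi_*\mu\neq\nu$ (en comparant des volumes de voisinages d'une pointe ou d'un point de ramification). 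Vous aviez deviné, parmi trois pistes, que l'équidistribution des points de Heegner pourrait servir — mais l'argument n'est ni une mollification ni une amplification : c'est une \emph{contraposée} opposant deux théorèmes d'équidistribution.

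Enfin, un point mineur : la descente de Kolyvagin n'est pas nécessaire pour votre étape de réduction. Comme $\hat h$ est définie positive sur $E(\overline\BmQ)\otimes_\BmZ\BmC$, l'inégalité $\hat h(z_\chi)>0$ donne directement $z_\chi\neq 0$ dans $(E(H_D)\otimes_\BmZ\BmC)^\chi$, donc cet espace isotypique est non nul ; aucun argument d'Euler system n'est requis pour cette implication.
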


\begin{remark} Soit $N>1$ le conducteur de $E$. Il est possible de préciser le fait que $D$ est assez grand en disant que $|D|\ge N^A$ où $A>0$ est une constante absolue, mais ineffective. Pour cela, il suffit de vérifier que toutes les constantes de la démonstration ont une dépendance polynomiale en $N$, ce qui ne pose pas de difficulté particulière.
\end{remark}

\begin{remark} Toutes les notions de cet article peuvent être vues à isogénie près. Par exemple, ne dépendent que de la classe d'isogénie de $E$: le rang $E(H_D)$, la construction des points de Heegner, les fonctions $L(s,f\times \chi)$, donc en particulier le signe $\Msgn(E,D)$, le conducteur $N$. Cela suggère donc que la démonstration aussi doit être suffisament robuste pour ne pas être sensible aux isogénies.
\end{remark}

\begin{remark}\label{rem:lindelof} L'exposant $\delta$ provient d'une majoration de sous-convexité pour la convolution de Rankin-Selberg $GL(2)_\BmQ \times GL(2)_\BmQ$. Dans le contexte du Théorème~\ref{th:main}, cette majoration profonde a été établie par Ph.~Michel dans~\cite{Mich04} avec l'exposant $\delta=1/1057$. Sous l'hypothèse de Lindel\"of généralisée, on pourrait choisir $\delta=1/2-\epsilon$ avec $\epsilon$ arbitrairement petit.
\end{remark}

\begin{remark}
Dans \cite{Temp:cras}, l'auteur a énoncé un résultat similaire où~\eqref{eq:th:main:sgn} est remplacée par la \Lquote{condition de Heegner}. La condition de Heegner est le fait que tous les facteurs premiers de $N$ sont décomposés dans $\BmQ(\sqrt{D})$. La condition~\eqref{eq:th:main:sgn} est optimale au sens où c'est la condition la plus faible sous laquelle on peut construire des points de Heegner de discriminant $D$ sur $E$.
\end{remark}

\begin{remark} 
G.~Ricotta et T.~Vidick~\cites{RV05,RT08} ont réalisé des calculs numériques et établi que l'inégalité~\eqref{principal} est valide en moyenne sur $D$ sous la condition de Heegner.
\end{remark}

\begin{remark} 
Dans~\cite{MV05}*{Theorem 3}, on peut trouver une démonstration conditionnelle du Théorème~\ref{th:main}. La démonstration repose sur une hypothèse délicate notée $\CmS_{\beta,\theta}$. Sous l'hypothèse de Lindelöf généralisée, $\CmS_{\beta,\theta}$ est vraie pour tout $0<\beta <1$ et $\theta>0$ ce qui impliquerait~\eqref{principal} avec l'exposant $\delta=1/10$. Cet exposant est moins bon que l'exposant $1/2$ discuté dans la Remarque~\ref{rem:lindelof}, mais il faut souligner que le raisonnement de~\cite{MV05}*{section 4} ne fait pas appel à la profonde formule de Gross-Zagier.
\end{remark}

\begin{remark}
La signification de l'hypothèse $\CmS_{\beta,\theta}$, est qu'il existe \Lquote{beaucoup} de petits premiers décomposés dans $\BmQ(\sqrt{D})$ ce qui s'interprète géométriquement en le fait qu'il existe \Lquote{beaucoup} de points de Heegner qui sont hauts dans la pointe de $X_0(N)$ -- au sens où la partie imaginaire est grande. Dans le chapitre~\ref{sec:Heegner}, on démontre le Théorème~\ref{th:main} sous la condition de Heegner et notre démonstration repose également sur un résultat quantitatif (les Lemmes~\ref{lem:LD:minoration} et \ref{lem:heegnerpointe}). On peut comprendre le raisonnement de la manière suivante: la majoration de Burgess implique que les points de Heegner sont hauts dans la pointe \emph{en moyenne} (c'est une version faible du théorème d'équidistribution de Duke).
\end{remark}

\begin{remark}
Jusqu'à présent, le seul résultat connu concernant le rang de $E(H_D)$ est le fait qu'il est $\ge 1$ pour $\pabs{D}$ assez grand. En effet par le résultat de Nekovar-Schapparer~\cite{NS99}, les points de Heegner de discriminant $D$ ne sont pas de torsion pour $\pabs{D}$ assez grand. Signalons qu'il y a un point commun entre~\cite{NS99} et notre approche: une étape importante de la démonstration du Théorème~\ref{th:rang} consiste à établir (voir~\eqref{minoration:heegner}), que la hauteur de Néron-Tate des points de Heegner tend vers $+\infty$ avec $D$ (rappelons que les points de torsion sont les points de hauteur nulle).
\end{remark}

\begin{remark}
 Bien que les travaux de Buium-Poonen~\cite{BP07} et le présent article établissent des résultats de nature différente et ont été conduits indépendamment, il est intéressant de noter que certains arguments sont similaires, notamment l'idée de comparer deux *-limites de mesures et de montrer qu'elles sont distinctes (c'est aussi l'argument clé dans la démonstration de la conjecture de Bogomolov~\cites{Zhan98,Ullm98}).
\end{remark}

\subsection{Le plan de la démonstration.}\label{sec:intro:plan}
Dans ce paragraphe nous expliquons notre stratégie pour démontrer le Théorème~\ref{th:main}. 

La première observation est que la condition~\eqref{eq:th:main:sgn} est \emph{équivalente} à l'existence de points de Heegner de discriminant $D$ sur $E(H_D)$. Ce fait classique est bien entendu une clé pour obtenir le Théorème~\ref{th:main} dans ce degré optimal de généralité.~\footnote{Dit autrement: dès que le le groupe $E(H_D)$ est gros ($1^{\text{er}}$ cas), on~\Lquote{sait} construire des points algébriques.} Notre objectif est donc de montrer que ces points sont suffisamment indépendants. Précisément on montrera que le sous-groupe engendré par les points de Heegner de discriminant $D$ est de rang $>\pabs{D}^\delta$ pour $D$ assez grand. 

Pour cela on utilise l'action par le groupe de Galois $\MGal(H_D/\BmQ(\sqrt{D}))=\MCl_D$ et la formule de Gross et Zagier dont on rappelle en détail l'énoncé dans le chapitre~\ref{sec:GZ}. Soit $\chi$ un caractère de $\MCl_D$. On déduit de cette formule que l'espace $\chi$-isotypique $(E(H_D)\otimes_\BmZ \BmC)^\chi$ est non réduit à $\{0\}$ si la valeur spéciale de la dérivée $L'(\Mdemi,E\times \chi)$ est non nulle.

Pour étudier la non-annulation de $L'(\Mdemi,E\times \chi)$, on fait une moyenne sur les caractères $\chi$, voir l'équation~\eqref{GZ} (on appellera cette moyenne \Lquote{moment}). On a besoin d'une estimée asymptotique pour ce moment, ou plus exactement d'une minoration. Pour cela, on utilise \emph{une seconde fois} la formule de Gross et Zagier: le moment est égal à la hauteur de Néron-Tate d'un point de Heegner de discriminant $D$ sur $E$.

La minoration de cette hauteur (Proposition~\ref{prop:height}) est le point central de la démonstration et fait l'objet du chapitre~\ref{sec:general}. Il est connu des experts (bien que non publié), que la hauteur des points spéciaux ou plus généralement des sous-variétés spéciales des variétés de Shimura ne s'accumule pas en zéro. La Proposition~\ref{prop:height} dit que c'est également le cas lorsque l'on projette sur les variétés abéliennes quotientes. On montrera ce résultat dans le chapitre~\ref{sec:general} en faisant appel à deux résultats profonds d'équidistribution en géométrie arithmétique: le théorème de L.~Szpiro, E.~Ullmo et Zhang~\cite{SUZ97} concernant les petits points et le théorème de W.~Duke~\cite{Duke88} concernant les points de Heegner.

Pour conclure à la non-annulation de $L'(\Mdemi,E\times\chi)$, on fait appel à la majoration de sous-convexité~\eqref{subconvex} démontrée par Michel~\cite{Mich04}.

\begin{remark}
Une différence notable avec les travaux de Cornut-Vatsal est la suivante. Dans les travaux de Cornut-Vatsal, il est démontré que la dérivée spéciale $L'(1/2,E\times \chi)$ ne s'annule pas pour \emph{un} caractère $\chi$ suffisamment ramifié. Puis le résultat d'algébricité de Shimura implique que la non-annulation se produit pour tous les conjugués de $\chi$ par Galois (cette idée a été exploitée pour la première fois dans les travaux de Rohrlich~\cites{MR82,Rohr80a,Rohr80b,Rohr80c} -- qui contiennent aussi l'idée de former le moment d'ordre un). Si l'on fixe $D$ et $p$ (comme le font Cornut-Vasal), le nombre d'orbites par Galois des caractères anticyclotomiques de conducteur $p$ est borné lorsque $n\to \infty$. Cela vient du fait que l'on a des $\BmZ_p$-extensions. Ainsi la non-annulation pour un caractère de la famille (ou plus précisément un nombre fini fixé) entraîne la non-annulation pour toute la famille. Par contre, dans le cadre du Théorème~\ref{th:main} cet argument d'algébricité ne s'applique pas: les caractères du groupe de classes (=non ramifiés) ne sont pas en général conjugués.
\end{remark}

\begin{remark}
 L'auteur a développé d'autres méthodes pour estimer asymptotiquement le moment~\eqref{GZ}, la clé de voûte de la non-annulation. On renvoie à~\cite{Temp:shifted} pour une approche purement analytique. On trouvera également dans~\cite{Temp:shifted} une comparaison précise avec le présent article.
\end{remark}

\subsection{Reformulation.}
Dans la démonstration résumée ci-dessus on peut être plus précis encore et rendre complètement transparent le rôle de la formule de Gross-Zagier vis-à-vis de la conjecture BSD.

Ou bien $L'(1/2,E\times \chi)\not =0$ et alors on peut conclure par la formule de Gross-Zagier que $(E(H_D)\otimes_\BmZ \BmC)^\chi$ est non réduit à $\{0\}$. Ou bien $L'(1/2,E\times \chi) =0$, et la conjecture BSD impliquerait que $(E(H_D)\otimes_\BmZ \BmC)^\chi$ est de dimension au moins $3$.

La démonstration peut donc s'interpréter ainsi. La deuxième alternative est la plus favorable pour le rang, mais sa conclusion étant conjecturale on la met de coté! En fait on concentre même tous nos efforts pour montrer (inconditionnellement) que la première alternative se produit suffisamment souvent!

\subsection{Variétés abéliennes modulaires.}
Dans l'énoncé du Théorème 1, il est possible de remplacer la courbe elliptique $E$ par une variété abélienne modulaire. On explique dans ce paragraphe quelles sont les modifications à apporter pour traiter ce cas. Soit $f$ une forme primitive de poids $2$ et $A$ la variété abélienne associée par la construction d'Eichler-Shimura.

La condition~\eqref{eq:th:main:sgn} est remplacée par le fait que le changement de base de\footnote{Il serait interessant de caractériser ce signe en fonction de $A$ et $D$ seulement, sans faire appel à la forme $f$.} $f$ de $\BmQ$ vers $\BmQ(\sqrt{D})$ a une équation fonctionnelle avec signe $-1$. Le Lemme~\ref{lem:singularitepointe} doit être modifié de la manière suivante: {\it la mesure image $\varphi_*\mu$ est distincte de la mesure de Haar d'un translaté d'une sous-variété abélienne de $A$}. La démonstration de cette variante ne pose pas de difficultés particulières. Le théorème de Szpiro-Ullmo-Zhang~\cite{SUZ97} s'applique aux variétés abéliennes. Le reste de la démonstration s'adapte sans modification.

\begin{remark}
Grâce à la conjecture de Serre~\cite{bour:serre-conj}, on sait que certaines variétés abéliennes sont modulaires. C'est le cas des $\BmQ$-courbes elliptiques au sens de Ribet~\cite{Ribe92}, c'est-à-dire que $E$, définie sur $\overline{\BmQ}$, est isogène à tous ses conjuguées. C'est également le cas des $GL_2$-variétés abéliennes simples (Ribet~\cite{Ribe92} montre qu'une courbe elliptique est une $\BmQ$-courbe si et seulement elle est le quotient d'une $GL_2$-variété abélienne). On dit que $A$ est une $GL_2$-variété abélienne simple lorsque l'algèbre $\MEnd_\BmQ(A)$ est un corps de nombres de degré $\Mdim(A)$.
\end{remark}

\subsection{Problèmes ouverts.}\label{sec:intro:futur}
Comme on l'a dit dans la Remarque~\ref{rem:lindelof}, sous l'hypothèse de Lindelöf on pourrait choisir $\delta$ arbitrairement proche de $1/2$. On connaît des approches analytiques (procédé de mollification) pour démontrer des théorèmes de non-annulation à cette précision, voir notamment~\cites{Blom04,IS00,KMV00}. Dans le cas présent, il faudrait être en mesure d'évaluer asymptotiquement le second moment $\frac{1}{h(D)}\sum_{\chi \in \Mclgr_D} L'(1/2,f\times \chi)^2$ (en fait le second moment tordu pour être précis). Le grand écart entre la taille de la famille (environ $\pabs{D}^{1/2}$) et le conducteur de la famille (qui est $\pabs{D}^4$) rend cette analyse délicate, et en fait hors de portée des techniques actuelles.

En fait il serait très intéressant (et encore plus difficile!) de montrer que \emph{toutes} les dérivées spéciales $L'(1/2,E\times \chi)$ sont non nulles pour $D$ assez grand \emph{et} $\chi \in \Mclgr_D$. On pourrait alors en déduire que les points de Heegner engendrent un sous-groupe de $E(H_D)$ qui est de rang $h(D)$ et d'indice fini, comme cela est signalée à la deuxième page de l'introduction de~\cite{Darm:integration}.

Signalons que la plupart des questions relatives à $E(H_D)$ pour $D>0$ (corps de classes des corps quadratiques réels) sont ouvertes à l'heure actuelle, voir~\cite{Darm:ICM}. Pourtant la conjecture BSD implique de la même fa\c{c}on que $\Mrang E(H_D) \ge h(D)$ lorsque $\Msgn(E,D)=-1$. Bertolini-Darmon construisent des points de Stark-Heegner et conjecturent qu'ils sont définis sur $H_D$. On peut penser qu'ils engendrent un large sous-groupe comme dans le Théorème~\ref{th:main}. Pour mesurer la difficulté d'une telle question, rappelons que lorsque $D\rightarrow +\infty$, on ne connaît aucune borne inférieure pour le nombre de classes $h(D)$.

\subsection{Notations.}\label{sec:intro:notation} Si $\pi$ est une représentation automorphe parabolique de $GL_n/\BmQ$, on note $L(s,\pi)$ sa fonction $L$, normalisée de sorte que l'équation fonctionnelle lie $s$ et $1-s$.

\subsection{Organisation des chapitres.}\label{sec:intro:articulation}
On a choisi de concentrer les arguments nouveaux dans les chapitres très courts~\ref{sec:demo} et~\ref{sec:general}. On espère ainsi rendre la lecture de la démonstration totalement transparente. Les chapitres sont articulés de la manière suivante. 

Dans le chapitre~\ref{sec:GZ} on rappelle le formalisme des courbes de Shimura, des points de Heegner et l'énoncé de la formule de Gross et Zagier. On y démontre le Lemme~\ref{lem:singularitepointe} qui sera important pour la suite. Dans le chapitre~\ref{sec:demo} on démontre le Théorème~\ref{th:main} à partir de la Proposition~\ref{prop:height}. Dans le chapitre~\ref{sec:general} on démontre la Proposition~\ref{prop:height}. Dans l'appendice~\ref{sec:Heegner} on démontre la Proposition~\ref{prop:height} sous la condition de Heegner usant d'arguments rudimentaires faisant appel aux pointes des courbes modulaires.


\section{Rappels: la formule de Gross et Zagier.}\label{sec:GZ}
Les résultats de ce chapitre sont connus. Nous n'avons pas trouvé une référence qui donne clairement les énoncés nécessaires à la démonstration du Théorème~\ref{th:main} dans le cas optimal de la condition~\eqref{eq:th:main:sgn}. C'est pourquoi il nous paraît judicieux de rappeler ici le \emph{strict nécessaire} des résultats qui mènent à la construction des points de Heegner sur les courbes elliptiques (une référence plus complète est~\cite{Zhan01:annals} mais il est demandé  que $N$ et $D$ soient premiers entre eux). Nous ne donnerons aucune démonstration mais renvoyons à  suffisamment de références bibliographiques pour que le lecteur consciencieux puisse reconstituer sans trop de difficulté des démonstrations complètes.

\subsection{Courbes modulaires et courbes de Shimura.}\label{sec:GZ:courbes}
Pour construire une courbe de Shimura, on a besoin de certaines \Lquote{données} dans la terminologie introduite dans~\cite{cong:AMS79:deli}. Dans cet article, on pourra se contenter de la définition rudimentaire suivante:
\begin{definition}[Données]\label{def:donnee}
On désigne par $\CmN=(N_1,N_2,(K_p)_{p|N_1N_2})$ la donnée de deux entiers $N_1\ge 1$ et $N_2\ge 1$ premiers entre eux et pour tout premier $p$ divisant $N_1N_2$ d'une extension quadratique $K_p$ de $\BmQ_p$, à isomorphisme près. On dira que $\CmN$ est de niveau $N_1N_2$. On impose de plus les conditions suivantes:
\begin{enumerate}
 \item[(i)] lorsque $p|N_2$, $K_p$ est un corps;
 \item[(ii)] le nombre de facteurs premiers de $N_2$ est pair;
 \item[(iii)] lorsque $p|N_2$ et $K_p=\BmQ_{p^2}$ est l'extension non ramifiée de $\BmQ_p$, $v_p(N_2)$ est impair;
 \item[(iv)] lorsque $p|N_1$ et $K_p=\BmQ_{p^2}$ est l'extension non ramifiée de $\BmQ_p$, $v_p(N_1)$ est pair.
\end{enumerate}
\end{definition}

\begin{remark} Observons qu'il y a au plus $15\times 7^{\omega(N)}$ données de niveau $N\ge 1$, où $\omega(N)$ est le nombre de diviseurs premiers de $N$.
\end{remark}

\begin{lemma}[Algèbre de quaternions] Soit $\CmN$ une donnée de niveau $N$.
À isomorphisme près, il existe une unique algèbre de quaternions $B$ sur $\BmQ$ qui est ramifiée en les premiers divisant $N_2$. Pour tout premier $p|N_1N_2$, il existe un plongement $K_p \hookrightarrow B\otimes_\BmQ \BmQ_p$ (morphisme de $\BmQ_p$-algèbres) qui est unique à $B_p^\times$-conjugaison près.
\end{lemma}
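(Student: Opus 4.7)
Mon plan est de décomposer l'énoncé en deux parties indépendantes, selon que l'on construit l'algèbre globale $B$ ou que l'on discute des plongements locaux.

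\textbf{Existence et unicité de $B$.} Je m'appuierai sur la classification classique des algèbres de quaternions sur $\BmQ$ par leur ensemble fini de places ramifiées (suite exacte d'Albert--Brauer--Hasse--Noether pour le $2$-Brauer de $\BmQ$, ou de manière équivalente, réciprocité quadratique via la forme norme). Cette classification assure qu'une algèbre de quaternions sur $\BmQ$ est déterminée à isomorphisme près par son ensemble fini, de cardinal pair, de places de ramification ; et qu'inversement tout ensemble de places de cardinal pair provient d'une unique telle algèbre. La condition~(ii) garantit que l'ensemble des premiers divisant $N_2$ est de cardinal pair, si bien qu'il existe une unique $B/\BmQ$ ramifiée exactement en ces premiers et nulle part ailleurs (en particulier $B$ est automatiquement indéfinie, non ramifiée à l'infini).

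\textbf{Plongements locaux.} Pour chaque premier $p\mid N_1N_2$, la question est purement locale et se scinde en deux cas. Si $p\mid N_2$, l'algèbre $B_p:=B\otimes_\BmQ \BmQ_p$ est l'unique algèbre de quaternions de division sur $\BmQ_p$. Je rappellerai alors le résultat classique que toute extension quadratique séparable de $\BmQ_p$ se plonge dans $B_p$ (on peut par exemple réaliser $B_p$ comme une algèbre cyclique sur n'importe laquelle de ses sous-extensions quadratiques); la condition~(i) garantit que $K_p$ est bien un corps et admet donc un plongement. L'unicité à conjugaison par $B_p^\times$ près résulte du théorème de Skolem--Noether appliqué à l'algèbre centrale simple $B_p$. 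Si $p\mid N_1$, on a $B_p\cong M_2(\BmQ_p)$ et toute $\BmQ_p$-algèbre étale quadratique $K_p$, qu'il s'agisse d'un corps ou de $\BmQ_p\times\BmQ_p$, se plonge dans $M_2(\BmQ_p)$ via son action par multiplication sur $K_p$ vu comme $\BmQ_p$-espace vectoriel de dimension $2$. L'unicité à $GL_2(\BmQ_p)$-conjugaison près est à nouveau Skolem--Noether.

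\textbf{Principale difficulté.} Aucune étape n'est techniquement profonde : il s'agit d'une synthèse de résultats standards de la théorie des algèbres centrales simples. L'enjeu véritable est plutôt d'articuler soigneusement les conditions imposées dans la Définition~\ref{def:donnee}~: (i) est la condition nécessaire et suffisante pour qu'un plongement existe aux places où $B$ se ramifie (il n'y a pas de diviseur de zéro dans $B_p$), tandis que (ii) est l'obstruction cohomologique globale à l'existence de $B$. Les conditions~(iii) et (iv), qui portent sur la parité des valuations $v_p(N_i)$ dans le cas non ramifié, n'interviendront pas ici ; elles seront indispensables ultérieurement pour construire un ordre d'Eichler de niveau $N_1$ dans $B$ qui soit compatible, localement en chaque $p$, avec un ordre choisi de $K_p$.
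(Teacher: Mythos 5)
Votre démonstration est correcte et correspond exactement à ce que le papier a en tête : l'article ne donne d'ailleurs aucune preuve, se contentant d'indiquer que le lemme est \og bien connu\fg{} et ne fait appel qu'aux conditions (i) et (ii), et votre synthèse (classification d'Albert--Brauer--Hasse--Noether des algèbres de quaternions sur $\BmQ$ par l'ensemble pair de places ramifiées, critère local de plongement, Skolem--Noether pour l'unicité) est précisément l'argument standard sous-entendu. Le seul point à préciser est le cas déployé $p\mid N_1$ avec $K_p\simeq \BmQ_p\oplus\BmQ_p$ : le théorème de Skolem--Noether, sous sa forme usuelle, exige une algèbre source \emph{simple}, donc pour l'unicité il vaut mieux observer qu'un plongement munit $\BmQ_p^2$ d'une structure de $(\BmQ_p\oplus\BmQ_p)$-module fidèle de dimension $2$, unique à isomorphisme près (somme des deux caractères), ce qui donne la conjugaison sous $GL_2(\BmQ_p)$.
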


Ce lemme est bien connu et fait appel aux conditions (i) et (ii) de la Définition~\ref{def:donnee}. Dans la suite on pose $B_p=B\otimes_\BmQ \BmQ_p$ et on fixe un plongement $K_p
\hookrightarrow B_p$. On peut trouver une démonstration claire du lemme suivant dans \cite{Gros88}*{Proposition 3.4} (voir également les sections 2 et 3 de~\cite{HPS89} pour une description explicite lorsque $B_p$ est non déployée).
\begin{lemma}[Ordre de Bass]\label{lem:bass} Pour tout premier $p|N_1N_2$, il existe un ordre local $\CmR_p\subset B_p$ qui contient l'anneau des entiers de $K_p$ et qui est de discriminant réduit $p^{v_p(N)}$. Il est unique à $K^\times_p$-conjugaison près. Il existe un ordre global $\CmR \subset B$ tel que $\CmR\otimes_\BmZ \BmZ_p = \CmR_p$ pour tout $p|N_1N_2$ et $\CmR\otimes_\BmZ \BmZ_p \simeq M_2(\BmZ_p)$ pour tout $p\nmid N_1N_2$.
\end{lemma}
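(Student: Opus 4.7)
Le plan est de démontrer successivement les trois assertions du lemme: existence locale d'un ordre $\CmR_p$ ayant les propriétés requises, unicité à $K_p^\times$-conjugaison près, et construction globale de $\CmR$ par recollement adélique. L'argument est essentiellement local en chaque premier $p\mid N_1N_2$, suivi d'un principe local-global standard.

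Pour l'existence locale, je traiterai séparément les premiers $p\mid N_1$ (où $B_p\simeq M_2(\BmQ_p)$) et $p\mid N_2$ (où $B_p$ est une algèbre à division). Dans chaque cas, je choisirai un élément $j\in B_p^\times$ normalisant $K_p$ et induisant par conjugaison l'élément non trivial de $\MGal(K_p/\BmQ_p)$, ce qui donne une décomposition $B_p = K_p \oplus K_p\cdot j$. L'ordre recherché prendra la forme $\CmR_p = \CmO_{K_p} + \CmO_{K_p}\cdot j\pi^{k}$ pour un entier $k$ et une uniformisante $\pi\in B_p$ convenables, ajustés selon le type de $K_p$ (déployé, non ramifié, ou ramifié). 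Les conditions (iii) et (iv) de la Définition~\ref{def:donnee} sont précisément celles qui rendent ce choix possible: elles fournissent les parités requises pour que le discriminant réduit vaille exactement $p^{v_p(N)}$ tout en garantissant l'inclusion de $\CmO_{K_p}$. Je renverrai à~\cite{Gros88}*{Proposition 3.4} et~\cite{HPS89}*{\S 2 et 3} pour la vérification explicite de ces calculs selon le type de $K_p$ et selon que $B_p$ est déployée ou non.

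L'unicité à $K_p^\times$-conjugaison près résultera de ce que deux tels éléments $j, j'$ de même type sont conjugués par un élément de $K_p^\times$ (théorème de Noether--Skolem appliqué au centralisateur de $K_p$ dans $B_p^\times$), ce qui transporte l'une des constructions sur l'autre. Pour la globalisation, je partirai d'un ordre maximal $\CmR_0 \subset B$ et, aux premiers $p\mid N_1 N_2$, ajusterai $\CmR_0 \otimes_\BmZ \BmZ_p$ par conjugaison dans $B_p^\times$ pour le faire coïncider avec $\CmR_p$. L'ordre global $\CmR$ se définira alors comme l'intersection dans $B\otimes_\BmQ \BmA_f$ de $B$ avec le produit des complétions souhaitées, les composantes étant $\CmR_p$ pour $p\mid N_1 N_2$ et un ordre maximal isomorphe à $M_2(\BmZ_p)$ sinon. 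L'existence d'un ordre global avec complétions locales prescrites est une conséquence standard du principe local-global pour les ordres dans les algèbres centrales simples.

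L'obstacle principal réside dans la construction locale, en particulier dans le calcul du discriminant réduit et dans la vérification que les conditions (iii) et (iv) correspondent \emph{exactement} aux obstructions de parité pour l'existence d'un ordre de discriminant $p^{v_p(N)}$ contenant $\CmO_{K_p}$. Ces conditions paraissent techniques au premier abord mais sont inévitables: sans elles, aucun ordre de discriminant prescrit ne pourrait contenir une copie compatible de l'anneau des entiers de $K_p$. Une fois les calculs locaux effectués dans~\cites{Gros88,HPS89}, tant l'unicité locale que le recollement global sont de nature formelle.
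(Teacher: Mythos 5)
Your proposal is correct and takes essentially the same route as the paper, which in fact gives no proof at all: it simply refers to \cite{Gros88}*{Proposition 3.4} and to the sections 2--3 of \cite{HPS89} for the local construction, exactly the references to which you defer the discriminant computations and the parity analysis of conditions (iii)--(iv). Your outline (local order of the form $\CmO_{K_p}+\CmO_{K_p}\cdot j\pi^{k}$, uniqueness via the normalizer of $K_p^\times$ in $B_p^\times$, then gluing a global order with prescribed completions) is the standard argument carried out in those references, so there is nothing to add beyond what the paper itself cites.
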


Soit $\BmA$ l'anneau des adèles sur $\BmQ$, $\BmA_f$ le sous-anneau des adèles finies et $\widehat{\BmZ}:=\varprojlim \BmZ/m\BmZ$. On pose $\widehat{B}:=B\otimes_\BmQ \BmA_f$ et $\widehat{\CmR}:=\CmR\otimes_\BmZ \widehat{\BmZ}$. L'ensemble $\widehat{\CmR}^\times$ est un sous-groupe ouvert compact de $\widehat{B}^\times$. Soit $G$ le groupe algébrique sur $\BmQ$ associé à $B^\times$, c'est-à-dire que $G(A)=(B\otimes_\BmQ A)^\times$ pour toute $\BmQ$-algèbre $A$. En particulier $G(\BmA_f)=\widehat{B}^\times$. On fixe un isomorphisme de $\BmR$-algèbres $B\otimes \BmR\simeq M_2(\BmR)$, de sorte que $G(\BmR)\simeq GL_2(\BmR)$. Soit $\BmS:=\MRes_{\BmC/\BmR}(\BmG_m)$ et $h:\BmS \rightarrow G(\BmR)$ le morphisme qui envoie $x+iy$ sur $\Mdede{x}{y}{-y}{x}$.
\begin{definition}
On note $X_\CmN$ la courbe de Shimura associée à la donnée de Shimura $(G,h,\widehat{\CmR}^\times)$. C'est une courbe projective lisse connexe sur $\BmQ$.
\end{definition}

\begin{example} Lorsque $N_2=1$ et pour tout $p|N_1$, l'extension $K_p$ est déployée (c'est-à-dire isomorphe à $\BmQ_p\oplus \BmQ_p$), $X_\CmN$ est la courbe modulaire $X_0(N_1)$.
\end{example}

L'ordre $\CmR$ n'est pas unique à $B^\times$-conjugaison près, mais il est unique à $\widehat{B}^\times$-conjugaison près. Ainsi $X_\CmN$ ne dépend pas du choix de $\CmR$ à isomorphisme près. Dans la terminologie de~\cite{Eich55}, la donnée $\CmN$ détermine le \Lquote{type} de $\CmR$. Deux bonnes références pour des propriétés plus fines de $X_\CmN$ sont \cites{Zhan01:annals,book:Nekovar:euler}.\footnote{Dans~\cite{Zhan01:annals}, $K_p$ est non ramifiée pour tout $p|N_1N_2$.}

Pour alléger les notations, on note $X=X_\CmN$ dans la suite de ce paragraphe et dans les deux suivants. L'uniformisation complexe est donnée par:
\begin{equation}\label{XC}
 X(\BmC)= B^\times \SB (\BmC-\BmR) \times \widehat{B}^\times /\widehat{\CmR}^\times
\end{equation}
(du moins lorsque $N_2>1$). Par le théorème d'approximation forte~\cite{book:PR}, on a $B_+^\times\widehat{\CmR}^\times=\widehat{B}^\times$ où $B_+^\times$ est le sous-groupe des éléments de $B^\times$ de norme réduite positive. Ainsi $X$ est géométriquement connexe et $X(\BmC)=\Gamma \SB \FmH$ où $\FmH$ est le demi-plan de Poincaré et $\Gamma \subset SL_2(\BmR) \subset GL_2(\BmR)$ est le réseau arithmétique cocompact défini par $\Gamma:=B_+^\times \cap \widehat{\CmR}^\times$. La courbe $X$ n'est rationnelle (isomorphe à $\BmP_\BmQ^1$) que dans un nombre fini de cas qui seront automatiquement exclus dans la suite.

\begin{example}
Lorsque $N_2=1$, il faut modifier légèrement l'uniformisation complexe~\eqref{XC}. Le réseau $\Gamma$ n'est pas cocompact et il manque au quotient $\Gamma\SB \FmH$ un nombre fini de pointes pour former la surface de Riemann compacte $X(\BmC)$. L'exemple de la courbe modulaire $X_0(N_1)$ est bien connu.
\end{example}

\remark Dans les articles~\cites{Buzz97,BD96,BC91,BD07} il est demandé que $N_2$ soit sans facteur carré. Dans ce cas les ordres locaux $\CmR_p$ et l'ordre $\CmR$ sont des ordres d'Eichler. Un ordre d'Eichler est l'intersection de deux ordres maximaux.

\subsection{Construction d'un diviseur rationnel de degré $1$.}\label{sec:GZ:xi}
À partir de maintenant on suppose que $X$ n'est pas rationnelle. Soit $J$ la Jacobienne de $X$. C'est une variété abélienne sur $\BmQ$ de dimension le genre de $X$.

Dans \cites{Zhan01:annals,Zhan01b}, Zhang construit un diviseur rationnel sur $X$ (qu'il baptise \Lquote{diviseur de Hodge} pour l'analogie des géométries complexes et d'Arakelov). Ce diviseur joue un rôle important dans l'énoncé de la formule de Gross-Zagier générale et dans le fait que l'image des points de Heegner sur les courbes elliptiques restent définis sur $H_D$ (et pas sur une extension de $H_D$). 
\begin{lemma}[Zhang] Il existe un unique diviseur $\xi\in \MPic(X)\otimes_\BmZ \BmQ$ avec les propriétés suivantes:
\begin{enumerate}
 \item[(i)] $\xi$ est de degré $1$,
 \item[(ii)] pour tout entier $n$ premier à $N_1N_2$, $T_n\xi=\deg(T_n)\xi$, où $T_n$ est le $n$-ième opérateur de Hecke.
\end{enumerate}
On note $\iota:X\rightarrow J$ le morphisme défini sur $\BmQ$ induit par ce diviseur. 
\end{lemma}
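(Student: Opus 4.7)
Le plan est d'exploiter l'action de l'algèbre de Hecke pour isoler une droite \Lquote{Eisenstein} canonique dans $\MPic(X)\otimes_\BmZ \BmQ$. L'ingrédient clé est la borne de Ramanujan-Petersson (démontrée par Deligne en poids $2$) qui garantit qu'aucune valeur propre de Hecke cuspidale n'égale la valeur propre Eisenstein $\deg(T_p)=p+1$.

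D'abord je considère la suite exacte de $\BmT$-modules
\begin{equation*}
0 \to J(\overline{\BmQ})\otimes_\BmZ \BmQ \to \MPic(X)\otimes_\BmZ \BmQ \xrightarrow{\deg} \BmQ \to 0,
\end{equation*}
où $\BmT$ désigne la sous-algèbre commutative de $\End(J)\otimes_\BmZ \BmQ$ engendrée par les correspondances de Hecke $T_n$ avec $(n,N_1N_2)=1$. Sur le quotient $\BmQ$, $T_n$ agit comme le scalaire $\deg(T_n)$ (car $T_n$ est une correspondance de degré $\deg(T_n)$), ce qui définit un caractère $\chi_0:\BmT\to \BmQ$. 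Je démontrerais alors que $\chi_0$ n'apparaît pas parmi les systèmes de valeurs propres de $\BmT$ sur $J(\overline{\BmQ})\otimes_\BmZ \overline{\BmQ}$. Par la théorie d'Eichler-Shimura combinée à la correspondance de Jacquet-Langlands (pertinente car $B$ est ramifiée aux places divisant $N_2$), ces systèmes coïncident avec ceux provenant des formes primitives de poids $2$ sur $GL(2)/\BmQ$ de niveau approprié. La borne de Deligne donne $|a_p|\le 2\sqrt{p}<p+1=\deg(T_p)$ pour tout premier $p\nmid N_1N_2$, donc $\chi_0$ n'apparaît pas dans $J\otimes_\BmZ \overline{\BmQ}$.

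L'unicité en découle immédiatement : si $\xi_1, \xi_2$ vérifient (i) et (ii), leur différence $\xi_1-\xi_2$ est dans $J\otimes_\BmZ \BmQ$ et est annihilée par $T_p-(p+1)$ pour tout $p\nmid N_1N_2$, donc est nulle. Pour l'existence, la non-occurrence de $\chi_0$ dans $J\otimes_\BmZ \BmQ$ fournit un scindage canonique $\BmT$-équivariant de la suite exacte ci-dessus : en effet pour un premier $p\nmid N_1N_2$ fixé, l'opérateur $T_p-(p+1)$ est inversible sur $J\otimes_\BmZ \BmQ$ et nul sur le quotient $\BmQ$; partant d'un relèvement quelconque $\tilde\xi_0\in \MPic(X)\otimes_\BmZ \BmQ$ de $1$, la projection de $\tilde\xi_0$ parallèlement à $J\otimes_\BmZ \BmQ$ (définie par le noyau de $T_p-(p+1)$ agissant sur $\MPic(X)\otimes_\BmZ \BmQ$) fournit le diviseur $\xi$ cherché. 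Le morphisme $\iota$ est alors l'unique morphisme $X\to J$ défini sur $\BmQ$ qui envoie un point $P$ sur la classe du diviseur $[P]-\xi$.

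L'étape la plus technique est la deuxième : il faut identifier précisément les valeurs propres de Hecke sur $J(X_\CmN)\otimes_\BmZ\overline{\BmQ}$ via la correspondance de Jacquet-Langlands dans le contexte de la donnée $\CmN$ et de l'ordre de Bass $\CmR$ du Lemme~\ref{lem:bass}, notamment lorsque $N_2>1$ (courbe de Shimura quaternionique sans pointes). Les deux ingrédients utilisés sont classiques, mais leur application demande un certain soin vis-à-vis de la structure de niveau locale encodée par les extensions quadratiques $K_p$.
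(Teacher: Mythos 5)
Votre démonstration est correcte dans ses grandes lignes, mais elle suit une route différente de celle du papier. Le papier construit $\xi$ explicitement: pour un niveau assez grand (lorsque $\Gamma$ est sans torsion) on prend la classe canonique normalisée de degré $1$, dont la propriété de vecteur propre sous les $T_n$ vient du caractère non ramifié des correspondances de Hecke aux bonnes places; le cas général s'obtient en projetant depuis une courbe de Shimura $X'$ de niveau supérieur via la formule d'Hurwitz (référence à Cornut--Vatsal, section 3.5). Vous, au contraire, obtenez l'existence par un argument purement spectral: la valeur propre d'Eisenstein $\deg(T_p)=p+1$ n'apparaît pas dans la partie cuspidale (Eichler--Shimura plus Jacquet--Langlands, puis la borne $\pabs{a_p}\le 2\sqrt{p}$ -- notez qu'en poids $2$ elle résulte déjà d'Eichler--Shimura et des bornes de Weil pour les courbes sur les corps finis, Deligne n'est pas nécessaire), donc $T_p-(p+1)$ est une isogénie de $J$, inversible sur $J\otimes_\BmZ\BmQ$, ce qui scinde la suite du degré; l'unicité (que le papier n'explicite pas, elle est implicite dans les références) en découle par le même argument. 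Votre approche évite la discussion de torsion et Hurwitz au prix de l'ingrédient spectral, tandis que celle du papier donne une description explicite de $\xi$ (classe canonique, ou pointes dans le cas modulaire, en accord avec Manin--Drinfeld). Deux points à rendre explicites chez vous: (1) la projection définie par le noyau de $T_p-(p+1)$ pour \emph{un} premier $p$ ne donne a priori que $T_p\xi=(p+1)\xi$; pour tout $n$ premier à $N_1N_2$, il faut observer que $(T_n-\deg(T_n))\xi$ est de degré $0$ et annulé par $T_p-(p+1)$ (commutativité de l'algèbre de Hecke), donc nul -- c'est une ligne, mais elle est nécessaire pour obtenir (ii) en entier; (2) votre suite exacte mélange $\MPic(X)$ (sur $\BmQ$) et $J(\overline{\BmQ})$; il faut soit travailler sur $\BmQ$ (l'inversibilité de $T_p-(p+1)$ dans $\MEnd(J)\otimes_\BmZ\BmQ$ suffit alors sur $J(\BmQ)\otimes_\BmZ\BmQ$), soit travailler sur $\overline{\BmQ}$ et déduire la rationalité de $\xi$ de l'unicité (les conjugués galoisiens de $\xi$ vérifient (i) et (ii), donc lui sont égaux), ce qui est aussi ce qui justifie que $\iota$ soit défini sur $\BmQ$.
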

Rappelons la construction de $\xi$. Lorsque $N_1N_2$ est suffisamment grand (typiquement lorsque le groupe $\Gamma$ est sans torsion) on peut prendre pour $\xi$ la classe du diviseur canonique $\frac{1}{g}[\Omega^1_X]$. Dans le cas général, il suffit de considérer la projection $X'\rightarrow X$ d'une courbe de Shimura $X'$ de niveau suffisamment grand et de projeter le diviseur $\xi'$ de $X'$ sur $X$. Le lemme est alors une conséquence facile de la formule d'Hurwitz, voir~\cite{CV05}*{section 3.5} pour plus de détails.

\begin{example}
Lorsque $N_2=1$, on peut choisir pour $\xi$ un diviseur de degré $1$ supporté en les pointes, typiquement $(i\infty)$ comme dans \cite{GZ}. L'unicité de $\xi$ est compatible avec le théorème de Manin-Drinfeld~\cites{Mani72,Drin73}.
\end{example}

\subsection{Points de Heegner.}\label{sec:GZ:heegner}
Soit $K=\BmQ(\sqrt{D})$ un corps quadratique imaginaire tel que $K\otimes_\BmQ \BmQ_p$ soit isomorphe à $K_p$ pour tout $p|N_1N_2$. Il existe un plongement $K\hookrightarrow B$ (unique à $B^\times$-conjugaison près). On en fixe un et on considère dans la suite que $K$ est inclus dans $B$. On choisit l'ordre $\CmR$ du Lemme~\ref{lem:bass} de telle sorte que $\CmO_K\subset \CmR$.

Soit $z$ l'unique point de $\FmH\subset \BmC-\BmR$ qui est fixe par l'action de $K^\times$ (cette action est induite par l'inclusion $K^\times\subset B^\times\simeq GL_2(\BmR)$).
Soit $z_D\in X(\BmC)$ le point qui est défini par la double classe $B^\times[z,1]\widehat{\CmR}^\times$ dans l'uniformisation~\eqref{XC}. 

D'après la théorie de la multiplication complexe, voir~\cites{conf:complex-mult} ou \cite{book:Nekovar:euler}*{section 2.4}, le point $z_D$ est algébrique, défini sur $K^{\Tab}$ et l'action par le groupe de Galois $\MGal(K^{\Tab}/K)$ est donnée de la manière suivante. Soit $\Mrec: K^\times \SB \widehat{K}^\times \rightarrow \MGal(K^{\Tab}/K)$ l'application de réciprocité du corps de classes. Alors pour tout $t\in \widehat{K}^\times$ le conjugué de la double classe $B^\times[z,1]\widehat{\CmR}^\times$ par l'automorphisme $\Mrec(t)$ est la double classe $B^\times[z,t]\widehat{\CmR}^\times$.

Les doubles classes $B^\times[z,1]\widehat{\CmR}^\times$ et $B^\times[z,t]\widehat{\CmR}^\times$ sont égales si et seulement si il existe $b\in B^\times$ et $k\in \widehat{\CmR}^\times$ tels que $b\cdot z=z$ et $b=tk$. La première condition est équivalente à $b\in K^\times$ et alors la deuxième condition implique $k\in \widehat{K}^\times \cap \widehat{\CmR}^\times=\widehat{\CmO_K}^\times$. Le stabilisateur de $z_D$ par Galois est donc $\Mrec(K^\times \SB K^\times / \widehat{\CmO_K}^\times)$, qui n'est autre que $\MGal(H_D/K)$. En particulier, le corps de définition de $z_D$ est exactement $H_D$.

\subsection{Théorème de Wiles.}\label{sec:GZ:wiles}
Soit $E$ une courbe elliptique de conducteur $N$. On sait grâce aux travaux de Wiles~\cite{Wile95} et Taylor-Wiles~\cite{TW95} (le cas général est établi dans~\cite{BCDT01}), que l'on peut associer à $E$ une forme modulaire $f$ primitive de poids $2$ et de niveau $N$ et qu'il existe un morphisme non constant $\varphi: X_0(N)\rightarrow E$ défini sur $\BmQ$ (rappelons que $f$, vue comme forme différentielle sur $X_0(N)$ est proportionnelle au pullback par $\varphi$ d'une différentielle de Néron sur $E$). On demande traditionnellement que l'image de la pointe $i\infty$ soit l'origine de $E$.

Le morphisme $\varphi$ est \Lquote{une paramétrisation de Weil} au sens de~\cite{MS74}. On peut trouver des informations supplémentaires dans le rapport~\cite{bour:mazu} (comme les définitions d'une \Lquote{courbe de Weil forte} et d'une \Lquote{paramétrisation de Weil forte}). Rappelons ici (\cite{MS74}*{Lemme 1}), que $\varphi$ est étale en la pointe $i\infty$. En effet le premier coefficient de Fourier de $f$ est non nul par la théorie des formes nouvelles.

\subsection{Correspondance de Jacquet-Langlands.}\label{sec:GZ:JL}
On peut associer à $f$ une unique représentation automorphe cuspidale $\pi$ de $GL_2(\BmA)$, voir~\cites{book:gelb:adel,cong:Lfunc04:cogd}. On note $\pi\simeq \otimes_v \pi_v$ sa décomposition en produit tensoriel de représentations locales. Une conséquence de la correspondance de Jacquet-Langlands~\cite{book:JL}*{part III} est qu'il existe également un morphisme non constant $\varphi:X_\CmN\rightarrow E$ quelque soit la donnée $\CmN=(N_1,N_2,(K_p)_{p|N_1N_2})$ qui vérifie la condition suivante (le groupe $G/\BmQ$ et l'ordre $\CmR$ sont définis au \S~\ref{sec:GZ:courbes}):
\begin{hypothese}\label{def:JL} Pour tout $p|N_2$, la représentation $\pi_p$ est de carré intégrable. La représentation automorphe cuspidale $\pi'$ de $G(\BmA)$ associée à $\pi$ par la correspondance de Jacquet-Langlands admet un vecteur invariant par $\widehat{\CmR}^\times$.
\end{hypothese}
\begin{remark} Cette condition est \Lquote{locale}. En effet une formulation équivalente est la suivante. Pour tout $p|N_1$ (resp. $p|N_2$), la représentation $\pi_p$ (resp. $\pi'_p$) admet un vecteur invariant par le sous-groupe ouvert compact $\CmR^\times_p\subset G(\BmQ_p)$.
\end{remark}

L'objet de ce paragraphe est de rappeler brièvement les étapes de la construction du morphisme $\varphi$. On note $V_{\pi'}\subset L^2(Z(\BmA^\times)G(\BmQ)\SB G(\BmA))$ l'espace de la représentation $\pi'$ (où $Z\simeq \BmG_m$ est le centre de $G$). La Condition~\ref{def:JL} dit que le $\BmC$-espace vectoriel $V^{\widehat{\CmR}^\times}_{\pi'}$ des éléments qui sont invariants par $\widehat{\CmR}^\times$ est non-réduit à $\{0\}$. La $\BmQ$-algèbre de Hecke $\CmH:=\BmQ[\widehat{\CmR}^\times \SB \widehat{B}^\times / \widehat{\CmR}^\times]$ agit naturellement sur cet espace et la représentation est isotypique. On note $\CmI$ le noyau de cette action.

L'algèbre $\CmH$ agit par isogénies sur  la Jacobienne $J_\CmN$ de $X_\CmN$. Le quotient de $J_\CmN$ par $\CmI$ est une variété abélienne $E'$, voir par exemple~\cite{book:Nekovar:euler}*{(1.19)}. Elle est de dimension $1$ (courbe elliptique).
 
Par la relation d'Eichler-Shimura, la cohomologie $l$-adique de la courbe elliptique $E'$ est isomorphe à celle de $E$ (en tant que $\MGal(\overline{\BmQ}/\BmQ)$-module), pour tout $l\nmid N_1N_2$. Le théorème de Faltings~\cites{Falt83} montre que $E'$ est isogène à $E$.

La composée des trois applications $X_\CmN \stackrel{\iota}{\rightarrow} J_\CmN \rightarrow E' \sim E$ est le morphisme non constant $\varphi$ que l'on voulait construire.

\subsection{Mesures et points images.}\label{sec:GZ:image}
Grâce à l'application $\varphi$, on peut \Lquote{pousser} des objets de $X_\CmN$ sur $E$. Par exemple l'image $\varphi(z_D)$ du point de Heegner du \S~\ref{sec:GZ:heegner} appartient à $E(H_D)$ et jouera un rôle central dans la suite. Remarquons que le corps de définition de $\varphi(z_D)$ est en général strictement inclus dans $H_D$ (mais l'indice est borné par le degré de $\varphi$).

On peut aussi considérer des mesures images. Dans la suite on aura besoin du lemme suivant (qui apparaît indépendamment dans~\cite{BP07}*{Lemma 3.6}):
\begin{lemma}\label{lem:singularitepointe} Soit $\mu$ la mesure hyperbolique sur $X_\CmN(\BmC)\simeq \Gamma\SB \FmH$ qui provient de l'uniformisation par le demi-plan de Poincaré. La mesure image $\varphi_*\mu$ n'est pas une mesure de Haar de $E(\BmC)$.
\end{lemma}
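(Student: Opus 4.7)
L'idée est de calculer la densité $\rho(w)$ de $\varphi_*\mu$ par rapport à la mesure de Haar sur $E(\BmC)$ et de montrer qu'elle n'est pas bornée ; ceci suffira, car la densité d'une mesure de Haar est constante strictement positive. En dehors des valeurs critiques de $\varphi$, la formule du changement de variables donne
\[
\rho(w) = \sum_{z\in \varphi^{-1}(w)} \frac{m(z)}{|\varphi'(z)|^2},
\]
où $m(z)$ désigne la densité locale de $\mu$ par rapport à la mesure de Lebesgue dans une uniformisante de $X_\CmN(\BmC)$ (et $\varphi'(z)$ la dérivée dans ces mêmes coordonnées). On cherche donc un point $P\in X_\CmN(\BmC)$ où l'un des deux facteurs $m(z)$ ou $1/|\varphi'(z)|^2$ devient arbitrairement grand.

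\textbf{Cas des courbes modulaires} ($N_2=1$). Choisissons $P=i\infty$. Dans la coordonnée locale $q=e^{2\pi i z}$ autour de cette pointe, la mesure hyperbolique admet pour densité $\frac{1}{|q|^2(\log|q|)^2}$, qui diverge lorsque $|q|\to 0$. D'après le rappel du \S~\ref{sec:GZ:wiles}, le morphisme $\varphi$ est étale en $i\infty$ (le premier coefficient de Fourier de $f$ étant non nul) et envoie cette pointe sur l'origine de $E$. Dans une coordonnée locale $w$ sur $E$ centrée à l'origine, on a $w=cq+O(q^2)$ avec $c\neq 0$, d'où $\rho(w)\gg \frac{1}{|w|^2(\log|w|)^2}$ pour $w$ voisin de zéro, les éventuelles autres préimages de l'origine contribuant de façon bornée ; en particulier $\rho(w)\to +\infty$.

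\textbf{Cas des courbes de Shimura compactes} ($N_2>1$). La courbe $X_\CmN$ n'ayant plus de pointes, on mobilise d'autres sources de singularité. Si $g(X_\CmN)\ge 2$, la formule de Riemann--Hurwitz appliquée à $\varphi:X_\CmN\rightarrow E$ (avec $g(E)=1$) fournit un point de ramification $P$ d'indice $e\ge 2$ ; un calcul local donne alors $\rho(w)\sim |w-\varphi(P)|^{-2(e-1)/e}$. Si $g(X_\CmN)=1$, la formule de Gauss--Bonnet appliquée à l'orbifold $\Gamma\SB\FmH$ impose la présence de points elliptiques pour $\Gamma$, puisque la courbure vaut $-1$ et l'aire est strictement positive, tandis que la caractéristique orbifold serait nulle en l'absence de tels points. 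En un point elliptique $P$ d'ordre $e\ge 2$, la densité de $\mu$ est en $|u|^{2/e-2}$ dans une uniformisante $u$ de $X_\CmN$, et un raisonnement analogue donne $\rho(w)\to +\infty$ en $\varphi(P)$.

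\textbf{Obstacle principal.} La difficulté est moins calculatoire qu'organisationnelle : il s'agit de répertorier les trois sources possibles de singularité (pointe, ramification de $\varphi$, point elliptique) et de vérifier qu'au moins l'une d'entre elles est présente pour toute donnée $\CmN$ donnant une courbe $X_\CmN$ non rationnelle. Les calculs locaux eux-mêmes sont élémentaires, et la démonstration se transporte sans modification essentielle au cadre des variétés abéliennes modulaires évoqué plus loin.
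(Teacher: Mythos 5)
Votre démonstration est correcte et repose sur la même idée locale que celle du texte — comparer $\varphi_*\mu$ et la mesure de Haar au voisinage d'un point singulier (pointe lorsque $N_2=1$, point de ramification sinon) — mais vous y parvenez par un chemin différent dans le cas compact. Le texte observe en une ligne que la composée $\varphi\circ\rho:\FmH\to E(\BmC)$ est nécessairement ramifiée, faute de quoi $\FmH$ serait un revêtement, donc le revêtement universel, du tore $E(\BmC)$, ce qui contredirait $\FmH\not\simeq\BmC$; cette ramification provient indifféremment d'un point elliptique de $\Gamma$ ou d'un point de ramification de $\varphi$, et la comparaison $\mu(A)\sim r^2$ contre $\nu(\varphi\circ\rho(A))\ll r^{2e}$ conclut d'un seul coup. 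Vous obtenez l'existence du même point singulier en distinguant selon le genre: Riemann--Hurwitz si $g\ge 2$ (ramification de $\varphi$), Gauss--Bonnet si $g=1$ (points elliptiques de $\Gamma$); c'est complet puisque le cas rationnel est exclu, mais plus long, et votre formule $\rho(w)\sim\pabs{w-\varphi(P)}^{-2(e-1)/e}$ suppose implicitement que $P$ n'est pas un point elliptique — s'il l'est, la divergence est simplement renforcée, donc sans conséquence. De même, dans le cas $N_2=1$, l'affirmation que les autres préimages de l'origine contribuent de façon bornée est superflue (et pas toujours exacte: une autre pointe ou un point de ramification peut aussi s'envoyer sur l'origine), mais inoffensive puisque toutes les contributions sont positives et que seule la minoration importe. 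Enfin, votre passage par la densité est équivalent à la comparaison des mesures de petits ensembles faite dans le texte, laquelle a l'avantage de ne requérir aucune discussion d'absolue continuité; les deux approches se transportent pareillement au cadre des variétés abéliennes modulaires.
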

\begin{proof}
Soit $g:\BmC\rightarrow E(\BmC)$ le revêtement universel de $E(\BmC)$. La mesure de Haar $\nu$ sur $E(\BmC)$ est la mesure quotient d'une mesure de Haar sur $\BmC$.

Si $N_2=1$, considérons l'uniformisation locale de la pointe $i\infty$ (en tant que surface de Riemann). Elle est induite par l'application $\eta:\Gamma\SB \FmH\rightarrow \BmD$ donnée par $z\mapsto e^{2i\pi z/h}$, où $h>0$ la largeur de la pointe $i\infty$, c'est-à-dire que le stabilisateur de $i\infty$, est $\Gamma_\infty=\Mdede{1}{h\BmZ}{0}{1}$. Considérons les morphismes:
\begin{equation}
 \BmD 
 \stackrel{\eta}{\longleftarrow}
 X_\CmN(\BmC) 
 \stackrel{\varphi}{\longrightarrow}
  E(\BmC) 
  \stackrel{g}{\longleftarrow}
  \BmC
\end{equation}
Soit $A=\{x+iy,\ 0\le x < h, Y < y < \infty\}\subset \FmH$. Lorsque $Y>0$ est assez grand on peut considérer $A$ comme un sous-ensemble de $X_\CmN(\BmC)$. On va montrer que les volumes de $\varphi(A)$ pour les mesures respectives $\varphi_*\mu$ et $\nu$ sont distincts. Pour le premier volume on écrit:
\begin{equation}
 \varphi_*\mu(\varphi(A)) = \mu(\varphi^{-1}(\varphi(A))) \ge \mu(A) \sim Y. 
\end{equation}
Pour le second, soit $B=\eta(A)\subset \BmD$ qui est la boule ouverte de centre $0$ et de rayon $r:=e^{-2\pi Y/h}$. Lorsque $Y$ est assez grand, la restriction $\eta|_A:A\rightarrow B$ est inversible et on a: 
 \begin{equation}
  \varphi(A)=\varphi\circ \eta|^{-1}_A (B).
 \end{equation}
L'application $\varphi\circ \eta|^{-1}_A$ est holomorphe, donc $\varphi(A)$ est inclus dans une boule euclidienne de rayon proportionnel à $r$ (en fait le diamètre de $\varphi(A)$ est vraiment proportionnel à $r$ parce $\varphi$ est étale en $i\infty$). On en déduit que $\nu(\varphi(A))\ll r^2=e^{-4\pi Y/h}$. Il est clair que les deux volumes $\varphi_*\mu(\varphi(A))$ et $\nu(\varphi(A))$ sont distincts lorsque $Y$ est suffisamment grand.

Supposons que $N_2>1$. Alors $X_\CmN(\BmC)=\Gamma\SB \FmH$. Considérons cette fois la projection $\rho:\FmH\rightarrow \Gamma\SB \FmH$ et les morphismes:
\begin{equation}
 \FmH 
  \stackrel{\rho}{\longrightarrow}
 \Gamma\SB\FmH 
   \stackrel{\varphi}{\longrightarrow}
  E(\BmC)
     \stackrel{g}{\longleftarrow}
   \BmC.
\end{equation}
L'application composée $\varphi\circ \rho:\FmH \rightarrow \Gamma\SB \FmH \rightarrow E(\BmC)$ est nécessairement ramifiée (puisque le revêtement universel du tore $E(\BmC)$ est $g:\BmC\rightarrow E(\BmC)$ et que $\BmC$ et $\FmH$ ne sont pas isomorphes). Soit $x\in \FmH$ un point d'indice de ramification $e>1$. Soit $A$ une boule de centre $x$ et de rayon $r$ plus petit que le rayon d'injectivité de $\rho$ (en $x$). On va montrer que les volumes de $\varphi\circ \rho(A)$ pour les mesures respectives $\varphi_*\mu$ et $\nu$ sont distincts lorsque $r$ est suffisamment petit. Pour le premier volume, on a la minoration:
\begin{equation}
 \varphi_*\mu(\varphi\circ \rho(A))= \mu(\varphi^{-1}\varphi\circ \rho(A)) \ge \mu(\rho(A)) = \mu(A) \sim r^2.
\end{equation}
Pour le second, on observe que $\varphi\circ \rho(A)$ est incluse dans une boule euclidienne de rayon $\sim r^e$, de sorte que $\nu(\varphi\circ \rho(A))\ll r^{2e}$. Comme $e>1$, la conclusion est claire.
\end{proof}

\begin{remark} Il est possible de faire le même type de raisonnement à l'aide d'uniformisations $p$-adiques de $X_\CmN$.
\end{remark}

\begin{remark} La démonstration du Lemme~\ref{lem:singularitepointe} utilise uniquement le fait que $\varphi_\BmC$ est un morphisme non constant $X_\CmN(\BmC)\rightarrow E(\BmC)$. Dans \cite{Mazu91}, B.~Mazur montre que l'existence d'un tel morphisme est en fait équivalent au théorème de Wiles (où l'on demande en plus que $\phi$ soit définie sur $\BmQ$).
\end{remark}

\subsection{Fonctions $L$ de Rankin-Selberg.}\label{sec:GZ:RS}
Désignons par $\chi\in\Mclgr_D$ un caractère du groupe des classes d'idéaux de $K$. On note $L(s,f\times \chi)$ la convolution de Rankin-Selberg de $f$ avec l'induite quadratique de $\chi$, voir \cites{GZ,Zhan01b,book:JLII} pour plus de détails. On peut montrer que $L(s,f\times \chi)$ est auto-duale. Avec l'hypothèse~\eqref{S}, le signe de l'équation fonctionnelle ne dépend pas du caractère $\chi$. Il ne dépend donc que de $E$ et $D$, et on le note $\Msgn(E,D)\in \{\pm 1\}$.

\begin{remark} Le signe $\Msgn(E,D)$ est produit de facteurs $\epsilon$ locaux. Il n'est pas nécessaire de spécifier un caractère additif local car le facteur $\epsilon$ n'en dépend pas, voir~\cite{Gros88}*{Sections 5 et 6}. On note donc $\epsilon_v(1/2,f\times \chi)\in \{\pm 1\}$ le facteur epsilon local (normalisation autoduale de Tate). On peut vérifier que $\epsilon_\infty(1/2,f\times \chi)=-1$ et que pour tout premier $p$ qui ne divise pas $ND$, on a $\epsilon_p(1/2,f\times \chi)=1$. Le calcul de $\Msgn(E,D)$ se résume donc à un nombre fini de considérations locales.
\end{remark}

\begin{example}\label{ex:localroot} Pour les places restantes (c'est-à-dire \emph{lorsque} $p|ND$), il est possible de déterminer $\epsilon_p(1/2,f\times \chi)$ explicitement lorsque la ramification commune de $f$ et $K$ n'est pas trop importante. D'après \cite{Gros88}*{Proposition 6.3} (voir également \cite{Zhan01b}*{section 3.1} ou \cite{book:Nekovar:asterisque}*{équation (12.6.2.4)}), on a en effet:
\begin{equation*}
 \epsilon_p(1/2,f\times \chi)\chi_{D,p}(-1)=
 \begin{cases}
  \chi_{D,p}(N)&\text{si $(p,D)=1$,}\\
  \chi_{D,p}(N)&\text{si $p|D$ et $(p,N)=1$,}\\
  -a_p(f)\chi_{D,p}(N)&\text{si $p|D$ et $p||N$.}
 \end{cases}
\end{equation*}
Lorsque les facteurs carrés de $N$ sont premiers à $D$, on a donc
\begin{equation}
 \Msgn(E,D)=-\chi_D(N)\prod_{p|(N,D)} -a_p(f).
\end{equation}
Rappelons~\cite{Tate74} que pour $p>2$ avec $p|N$, on a $a_p(f)=+1$ (resp. $a_p(f)=-1$) si et seulement si $E$ a réduction multiplicative scindée (resp. non scindée).
\end{example}

\subsection{Conséquences d'une \'equation fonctionnelle impaire.}\label{sec:GZ:equation}
Dans toute la suite on fera l'hypothèse fondamentale suivante (qui est aussi~\eqref{eq:th:main:sgn} dans le Théorème~\ref{th:main}):
\begin{equation}\label{signe-1}
 \Msgn(E,D)=-1.
\end{equation}
Observons que cette condition n'est pas très contraignante. Par exemple elle est toujours satisfaite lorsque $(D,N)=1$ et $\chi_D(N)=1$.

On définit la donnée $\CmN=(N_1,N_2,(K_p)_{p|N_1N_2})$ de la manière suivante. L'entier $N_1$ est le produit des $p^{v_p(N)}$ où $p$ parcourt les premiers $p$ tels que $\epsilon_p(1/2,f\times \chi)=-\chi_{D,p}(-1)$. L'entier $N_2$ est divisible par $N/N_1$, tous ses facteurs premiers divisent $N/N_1$, et on le choisit suffisamment grand pour la Condition~\ref{def:JL} soit satisfaite. Il n'est pas difficile\footnote{En fait, sauf pour quelques cas très ramifiés, on peut choisir $N_2:=N/N_1$ d'après la proposition ~6.3 de~\cite{Gros88}, voir aussi~\cite{GP91}.} de vérifier que $N_2=O_E(1)$. Cela vient du fait qu'il n'y a qu'un nombre fini d'extensions quadratiques de $\BmQ_p$ pour $p|N$. Pour tout $p|N_1N_2$, on pose $K_p\simeq K\otimes_\BmQ \BmQ_p$. 

Le fait que la donnée $\CmN$ vérifie la hypothèse (ii) de la Définition~\ref{def:donnee} est équivalent à~\eqref{signe-1}. Le fait que la donnée $\CmN$ vérifie les hypothèses (i), (iii) et (iv) est conséquence de la formule de Tunnell~\cite{Tunn83}, voir aussi~\cite{Wald85b} et \cite{Gros88}*{Section 5} pour plus de détails ainsi que la démonstration élégante~\cite{Pras07}. On peut donc appliquer les constructions des \S~\ref{sec:GZ:courbes} à \ref{sec:GZ:heegner} (qui fournissent une courbe de Shimura $X_\CmN$ et un point de Heegner $z_D\in X_\CmN(H_D)$).

\begin{example}
Rappelons que la condition de Heegner est le fait que tous les facteurs premiers de $N$ sont décomposés par $K$. La condition de Heegner implique~\eqref{signe-1} (voir les formules données dans l'exemple~\ref{ex:localroot}). Dans ce cas, la donnée $\CmN$ est $(N,1,(\BmQ_p\oplus \BmQ_p)_{p|N})$ et $X_\CmN$ est la courbe modulaire $X_0(N)$. Les points de Heegner sont alors comme définis dans~\cite{GZ}*{chapter I}. Il faut voir~\eqref{signe-1} comme une version optimale de la condition de Heegner. 
\end{example}

En fait la formule de Tunnell contient plus d'informations. La donnée $\CmN$ a été choisie de telle sorte que pour tout $p|N_2$, la représentation $\pi_p$ de $GL_2(\BmQ_p)$ est de carré intégrable, et pour tout premier $p$ la représentation $\pi'_p$ de $B^\times_p$ possède une forme linéaire invariante par le sous-groupe $K^\times_p$.

Rappelons que $\CmN$ et plus particulièrement $N_2$ sont choisis de sorte que la seconde hypothèse de la Condition~\ref{def:JL} soit satisfaite (vecteur invariant par $\widehat{\CmR}^\times$). On peut donc appliquer les constructions des \S~\ref{sec:GZ:wiles} à \ref{sec:GZ:image} (existence d'un morphisme non constant $\varphi:X_\CmN\rightarrow E$).

\subsection{Énoncé.}\label{sec:GZ:formule}
La formule de Gross et Zagier a été établie initialement dans~\cite{GZ} sous la condition de Heegner\footnote{le dernier Chapitre V de \cite{GZ} explique que la démonstration devrait s'adapter au cas $(N,D)=1$.}, puis dans~\cite{Zhan01b} lorsque $(N,D)$ est sans facteur carré et par X.~Yuan, Zhang et W.~Zhang~\cite{YZZ08} dans le cas général.

La composition de $\chi$ avec l'application de réciprocité du corps de classes (notée $\Mrec$ dans le \S~\ref{sec:GZ:heegner}) est un caractère du groupe de Galois $\MGal(H_D/K)$. On introduit
\begin{equation}
 z_\chi:=\frac{1}{h(D)}\sum_{\SmA\in \MCl_D}\chi(\SmA)\varphi(z^\SmA_D),
\end{equation}
qui est la $\chi$-composante de $\varphi(z_D)$. On note $\widehat{h}:E(\overline{\BmQ})\rightarrow \BmR_+$ la hauteur de Néron-Tate (voir par exemple \cite{book:Silv:elliptic}*{Chapitre VIII, Section 9}). Rappelons que $\widehat{h}$ est une forme quadratique qui est nulle pour les points de torsion et dont la $\BmC$-extension à $E(\overline{\BmQ})\otimes_\BmZ \BmC$, que l'on notera encore $\widehat{h}$, est définie positive.

\begin{theorem}[Formule de Gross et Zagier]
 Il existe une constante $\alpha>0$ qui ne dépend que de $\varphi:X_\CmN\rightarrow E$ telle que:
\begin{equation}\label{formuleGZ}
 L'(1/2,f\times \chi)
 = \alpha L(1,\chi_D)
 \widehat{h}(z_\chi).
\end{equation}
\end{theorem}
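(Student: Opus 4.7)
La stratégie est de comparer une représentation intégrale analytique de $L'(1/2,f\times\chi)$ avec un calcul géométrique de $\widehat{h}(z_\chi)$ via la géométrie d'Arakelov sur un modèle arithmétique de $X_\CmN$, puis d'identifier les deux membres en calculant un nombre suffisant de coefficients de Fourier.

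Côté analytique, on choisit un noyau d'Eisenstein-thêta $E_\chi(\cdot,s)$ sur $X_\CmN$, obtenu par induction du caractère $\chi$ et convolution avec une série thêta attachée à $K$, tel qu'après normalisation adéquate on ait une représentation de Rankin-Selberg
\[
 L(s,f\times\chi) = \langle f, E_\chi(\cdot,s)\rangle.
\]
L'équation fonctionnelle $s\mapsto 1-s$ étant reflétée dans celle de $E_\chi(\cdot,s)$, l'hypothèse~\eqref{signe-1} force $L(1/2,f\times\chi)=0$, et la dérivée devient $L'(1/2,f\times\chi)=\langle f, E'_\chi(\cdot,1/2)\rangle$. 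Le développement de Fourier de $E'_\chi(\cdot,1/2)$ contient alors des termes non holomorphes du type $\log y$ qui ouvrent la voie à une interprétation géométrique.

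Côté arithmétique, on décompose $\widehat{h}(z_\chi)$ en hauteurs locales (formule de Néron)
\[
 \widehat{h}(z_\chi) = \sum_v \langle z_\chi, z_\chi^c\rangle_v,
\]
où $z_\chi^c$ est un cycle auxiliaire de degré opposé construit à partir des conjugués galoisiens de $\varphi(z_D)$. On calcule ces accouplements sur un modèle régulier propre de $X_\CmN$ sur $\MSpec(\BmZ)$ : aux places archimédiennes via la fonction de Green hyperbolique évaluée aux points CM, aux places $p\nmid N_1N_2D$ par bonne réduction, et aux places ramifiées par l'uniformisation $p$-adique de Cerednik-Drinfeld (ou les modèles modulaires explicites dans le cas $N_2=1$).

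On projette ensuite les deux membres sur la composante $f$-isotypique via $\varphi$, ce qui fait apparaître la constante $\alpha>0$ dépendant uniquement de $\varphi$ (essentiellement liée au degré de la paramétrisation et à la norme de Petersson de $f$). La comparaison finale confronte les coefficients de Fourier de $E'_\chi(\cdot,1/2)$ aux nombres d'intersection de Hecke $\langle T_m \iota(z_D), \iota(z_D)^c\rangle$ pour $m\ge 1$ premier à $N_1N_2D$ ; un argument de multiplicité un sur les formes modulaires réduit l'identité globale à ces identités locales. La partie archimédienne se traite par une formule de type Kronecker reliant la fonction de Green en les points CM aux dérivées de séries d'Eisenstein. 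L'obstacle principal est la partie non-archimédienne \emph{ramifiée} en les places $p|(N,D)$ : il faut identifier les multiplicités d'intersection aux nombres de représentations par des formes quadratiques ternaires, ce qui nécessite la théorie de Gross des ordres quasi-canoniques. C'est précisément la résolution progressive de cette difficulté qui mène des hypothèses de l'article original~\cite{GZ} (condition de Heegner, $(N,D)=1$) aux généralisations~\cite{Zhan01b} et~\cite{YZZ08}. Le facteur $L(1,\chi_D)$ apparaît nauturellement des deux côtés : comme terme constant du développement de la série d'Eisenstein côté analytique, et via la formule du nombre de classes côté arithmétique.
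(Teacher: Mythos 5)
The paper does not prove this theorem: the statement is the Gross--Zagier formula, and \S 2.8 simply cites Gross--Zagier, Zhang, and Yuan--Zhang--Zhang for its proof, having announced at the top of Chapter 2 that no proofs will be given there. Your proposal is therefore not an alternative to the paper's argument but a high-level summary of what those references actually carry out, and as a summary it is faithful: the Rankin--Selberg kernel (theta times Eisenstein), the N\'eron local decomposition of the height computed on an arithmetic model via archimedean Green functions and $p$-adic uniformization, the matching of Fourier coefficients with Hecke intersection numbers, a multiplicity-one argument, and the identification of the ramified places $p\mid(N,D)$ as the bottleneck that drove the passage from the original hypotheses of Gross--Zagier to the full generality of Yuan--Zhang--Zhang. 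However, it remains only an outline: each step you sketch (holomorphic projection of the derivative of the kernel, regular models and quasi-canonical liftings at bad primes, the reduction of the global identity to local ones) occupies dozens to hundreds of pages in the cited works, so a blind proof cannot honestly claim to reprove them here. Finally, note what the paper actually adds around the statement and what your proposal omits: the observation that, because there are only finitely many data $\CmN$ of level $N_1N_2=O_E(1)$, the constant $\alpha$ is bounded away from $0$ uniformly in $D$ (so $\alpha \gg_E 1$); it is precisely this uniformity, rather than the formula itself, that the later chapters crucially exploit.
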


Comme la donnée $\CmN$ dépend de $D$, la constante $\alpha$ dépend implicitement de $D$. Comme $E$ est fixée, et qu'il n'existe qu'un nombre fini de données $\CmN$ de niveau $N_1N_2=O_E(1)$, il est clair que $\alpha\in]0,\infty[$ est bornée lorsque $D$ varie; en particulier $\alpha \gg_E 1$.

\section{Le sous-groupe engendré par les points de Heegner.}\label{sec:demo}
\subsection{Non-annulation et rang.}\label{subsec:nonannulation}\label{sec:demo:nonannulation}
Si le réel $L'(\Mdemi,f\times \chi)$ est non nul, l'espace $\chi$-isotypique $(E(H_D)\otimes_\BmZ \BmC)^\chi$ est non réduit à $\{0\}$. En effet la hauteur de $z_\chi$ est alors strictement positive par la formule de Gross et Zagier~\eqref{formuleGZ}. Ainsi le rang du sous-groupe engendré par le point $\varphi(z_D)$ et ses conjugués par Galois est au moins égal au nombre de caractères $\chi$ tels que $L'(\Mdemi,f\times \chi)$ est non nul.

Rappelons que le nombre total de caractères $\chi\in \Mclgr_D$ est égal au nombre de classes $h(D)$, et que le théorème de Siegel~\cite{Sieg:classnumber} affirme que $h(D)\gg_\epsilon |D|^{\Mdemi-\epsilon}$ pour tout $\epsilon>0$.

\subsection{Majoration de sous-convexité.}\label{sec:demo:subconvex}
D'après la majoration de sous-convexité établie par Ph.~Michel \cite{Mich04}*{Theorem 2}, on a (avec $\delta:=1/1057$):
\begin{equation}\label{subconvex}
 L'(\Mdemi,f\times \chi)\ll_f |D|^{\Mdemi-\delta}, 
 \qtext{pour tout $\chi\in \Mclgr_D$.}
\end{equation}
On peut noter également que dans le preprint~\cite{MV09}, Michel-Venkatesh donnent une démonstration de cette majoration de sous-convexité dans le cas le plus général.

\subsection{Évaluation du premier moment.}\label{sec:demo:moment} On considère le moment d'ordre un. En appliquant la formule de Gross et Zagier (une seconde fois!), on obtient:
\begin{equation}\label{GZ}
  \frac{1}{h(D)}
  \sum_{\chi\in \Mclgr_D} 
  L'(\Mdemi,f\times \chi)
   =
   \alpha L(1,\chi_D)\widehat{h}(\varphi(z_D)).
\end{equation}

\begin{proposition}\label{prop:height} Il existe une constante $c>0$ qui ne dépend que de $\varphi:X_\CmN\rightarrow E$ et telle que pour tout discriminant $\pabs{D}$ suffisamment grand on a $\widehat{h}(\varphi(z_D))\ge c$.
\end{proposition}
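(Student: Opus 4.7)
Je propose de raisonner par l'absurde. On suppose qu'il existe une suite de discriminants fondamentaux négatifs $D_n$ vérifiant les hypothèses du Théorème~\ref{th:main} tels que $\widehat{h}(\varphi(z_{D_n}))\to 0$. La hauteur de Néron-Tate étant invariante par conjugaison galoisienne, tous les conjugués $\varphi(z_{D_n}^\SmA)$, pour $\SmA\in\MCl_{D_n}$, ont également une hauteur tendant vers $0$. L'idée directrice est de comparer deux mesures limites sur $E(\BmC)$ construites à partir de ces orbites et d'aboutir à une contradiction avec le Lemme~\ref{lem:singularitepointe}.

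\textbf{Équidistribution géométrique.} On utiliserait le théorème d'équidistribution de W.~Duke~\cite{Duke88}, étendu aux courbes de Shimura $X_\CmN$ par des travaux ultérieurs, selon lequel les orbites galoisiennes $\{z_{D_n}^\SmA : \SmA \in\MCl_{D_n}\}$ s'équidistribuent dans $X_\CmN(\BmC)$ selon la mesure hyperbolique normalisée $\mu$ lorsque $|D_n|\to\infty$. En notant $\nu_n$ la mesure de probabilité uniforme portée par cette orbite, la continuité de l'application holomorphe $\varphi$ entraîne alors la convergence faible $\varphi_*\nu_n\to\varphi_*\mu$ sur $E(\BmC)$.

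\textbf{Équidistribution arithmétique.} D'autre part, je ferais appel au théorème de Szpiro-Ullmo-Zhang~\cite{SUZ97} appliqué à la courbe elliptique $E$. L'équivariance galoisienne garantit que $\varphi_*\nu_n$ coïncide avec la mesure uniforme portée par l'orbite galoisienne $O_n:=\{\varphi(z_{D_n}^\SmA)\}\subset E(\overline{\BmQ})$, puisque les fibres de $\varphi$ restreintes à l'orbite ont toutes le même cardinal. Les hypothèses du théorème SUZ seraient alors remplies: le cardinal $|O_n|\ge h(D_n)/\deg(\varphi)$ tend vers l'infini, la hauteur des points tend vers $0$, et toute orbite infinie de points d'une courbe est automatiquement Zariski-dense. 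On en déduirait la convergence faible $\varphi_*\nu_n\to \nu$, où $\nu$ désigne la mesure de Haar normalisée sur $E(\BmC)$. Par unicité de la limite faible, on obtiendrait $\varphi_*\mu=\nu$, en contradiction directe avec le Lemme~\ref{lem:singularitepointe}.

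\textbf{Points délicats.} L'obstacle principal me semble être la justification rigoureuse des deux énoncés d'équidistribution dans le contexte précis de cet article: d'une part, l'invocation du théorème de Duke pour les courbes de Shimura $X_\CmN$ avec la ramification prescrite par la donnée $\CmN$; d'autre part, la bonne application du théorème de Szpiro-Ullmo-Zhang, qui nécessite de considérer les orbites sous $\MGal(\overline{\BmQ}/\BmQ)$ plutôt que sous $\MGal(H_D/K)$ — ces deux orbites ne diffèrent que d'un facteur borné (au plus $2$, puisque $[K:\BmQ]=2$), ce qui ne modifie ni la condition de petits points ni le cardinal asymptotique. Une fois ces deux ingrédients correctement installés, la contradiction est purement formelle.
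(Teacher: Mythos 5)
Votre proposition est correcte et suit essentiellement la même démarche que l'article : équidistribution de Duke sur $X_\CmN$ poussée en avant par $\varphi$, équidistribution de Szpiro-Ullmo-Zhang pour les petits points de $E$, puis contradiction (le texte procède par contraposée plutôt que par l'absurde) avec le Lemme~\ref{lem:singularitepointe} qui affirme $\varphi_*\mu\neq\nu$. Les points délicats que vous signalez (passage de l'orbite sous $\MCl_D$ à l'orbite galoisienne complète, facteur borné) sont traités dans l'article par la remarque que $\varphi$ étant définie sur $\BmQ$, le cycle $\varphi_*\CmO(z_D)$ est un multiple de $\CmO(\varphi(z_D))$.
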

{\it Démonstration du Théorème~\ref{th:main}.}\
Grâce à cette proposition (qui sera démontrée dans le chapitre~\ref{sec:general}), on peut conclure la démonstration du Théorème~\ref{th:rang}. En effet le membre de droite de l'équation~\eqref{GZ} est donc minoré par $\gg_\epsilon |D|^{-\epsilon}$ pour tout $\epsilon>0$ (on utilise la borne de Siegel pour minorer $L(1,\chi_D)$). En appliquant la majoration~\eqref{subconvex}, on en déduit qu'il existe au moins $\gg_\epsilon \pabs{D}^{\delta-\epsilon}$ caractères $\chi$ tels que $L'(\Mdemi,f\times \chi)$ est non nulle. Le Théorème~\ref{th:rang} en découle par la discussion du paragraphe~\ref{subsec:nonannulation}.

\remark Un corollaire de la formule de Gross et Zagier~\eqref{formuleGZ} est que $L'(\Mdemi,f\times \chi)$ est toujours positif ou nul. Ce fait n'a pas été utilisé dans la démonstration.

\section{Minoration de la hauteur.}\label{sec:general}
Pour un point $x$ de $E(\overline{\BmQ})$ ou $X_\CmN(\overline{\BmQ})$, on désigne par $\CmO(x)$ l'ensemble de ses conjugués par Galois (que l'on peut voir comme un $1$-cycle).

\subsection{Le théorème de Duke.}\label{sec:general:Duke}
Le théorème de Duke dit que $\CmO(z_D)\subset X$ est uniformément distribué selon $\mu$ lorsque $D\rightarrow -\infty$. Ce théorème est établi dans~\cite{Duke88} lorsque $N=1$, dans \cite{DFI4} lorsque $\CmN=(N,1,(\BmQ_p\oplus \BmQ_p)_{p|N})$ (courbes modulaires) et dans~\cite{Zhan05} dans le cas général. On peut également consulter~\cite{Mich04}*{Section 6} et \cite{HM06} pour des résultats complémentaires.

On en déduit que $\varphi_* \CmO(z_D)$ est uniformément distribué dans $E(\BmC)$ selon la mesure image $\varphi_* \mu$ lorsque $D\rightarrow -\infty$: il suffit de vérifier le critère de Weyl. Soit $F:E(\BmC)\rightarrow \BmR$ continue avec $\int F d\varphi_*\mu=0$. Alors 
\begin{equation}
\frac{1}{h(D)}\sum_{x\in \varphi_* \CmO(z_D)}
F(x)
=\frac{1}{h(D)}
\sum_{x\in \CmO(z_D)} F\circ \varphi (x)
\end{equation}
tend vers $\int F\circ \varphi d\mu=0$ lorsque $D\rightarrow -\infty$.
 
Comme $\varphi:X_\CmN \rightarrow E$ est définie sur $\BmQ$, il est clair que le cycle $\varphi_* \CmO(z_D)$ est un multiple du cycle $\CmO(\varphi(z_D))$. On en déduit que $\CmO(\varphi(z_D))$ est uniformément distribué selon $\phi_* \mu$ lorsque $D\rightarrow -\infty$.

\subsection{Le théorème de Szpiro-Ullmo-Zhang.}\label{sec:general:USZ}
Le théorème de Szpiro, Ullmo et Zhang \cite{SUZ97}*{Theorem 1.2} dans le cas des courbes elliptiques est l'énoncé suivant. Soit $(x_n)$ une suite de points de $E(\overline{\BmQ})$ deux à deux distincts et tels que $\widehat{h}(x_n)\rightarrow 0$ quand $n\rightarrow +\infty$. Alors $\CmO(x_n)$ est uniformément distribué selon la mesure de Haar $\nu$ sur $E(\BmC)$ lorsque $n\rightarrow +\infty$.

\subsection{Démonstration de la Proposition~\ref{prop:height}.}
D'après le Lemme~\ref{lem:singularitepointe}, les mesures $\phi_* \mu$ et $\nu$ sont distinctes. Par contraposée, on déduit des théorèmes de Duke et Szpiro-Ullmo-Zhang qu'il existe un réel $c>0$ tel que $\widehat{h}(\varphi(z_D))\ge c$ lorsque $\pabs{D}$ est suffisamment grand. Cela conclut la démonstration de la Proposition~\ref{prop:height} et du Théorème~\ref{th:rang}.

\section{Appendice: Minoration sous la condition de Heegner.}\label{sec:Heegner}
Dans ce chapitre on démontre la Proposition~\ref{prop:height} lorsque la condition de Heegner est satisfaite, c'est-à-dire que $X_\CmN=X_0(N)$ est la courbe modulaire. En fait on démontrera une minoration plus fine~\eqref{minoration:heegner}. L'idée de la démonstration provient de discussions avec A.~Venkatesh. Notons que la minoration~\eqref{minoration:heegner} est optimale à une constante multiplicative près d'après nos travaux~\cite{Temp:shifted}, voir la Remarque~\ref{rem:optimal}.

\subsection{Lemmes quantitatifs.}\label{sec:Heegner:burgess}
Définissons la quantité suivante:
\begin{equation}
 \CmL_D:=\Mdemi \log \pabs{D} +\frac{L'}{L}(1,\chi_D).
\end{equation}
Il est clair que l'on a $\CmL_D \ll_\epsilon \pabs{D}^\epsilon$ pour tout $\epsilon>0$. On a aussi $\CmL_D\rightarrow +\infty$ lorsque $D\rightarrow -\infty$. C'est un ingrédient important pour conclure à la minoration~\eqref{minoration:heegner}, où le $O_E(1)$ peut être négatif. Il découle de la majoration de sous-convexité de Burgess ou bien de la loi de Weyl uniforme pour les zéros de $L(s,\chi_D)$. Précisément d'après \cite{Temp:Eisenstein}*{Proposition 3.2}:
\begin{lemma}\label{lem:LD:minoration}
On a $\CmL_D > \frac{1}{3} \log |D|$ pour $\pabs{D}$ suffisamment grand.
\end{lemma}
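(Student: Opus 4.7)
My plan is to express $\CmL_D$ as a non-negative sum over the non-trivial zeros of $L(s,\chi_D)$ via Hadamard factorisation, and then lower-bound this sum using either the uniform Weyl law or Burgess's subconvexity estimate, as indicated by the author.

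Since $D<0$ the character $\chi_D$ is odd, so the completed $L$-function is $\Lambda(s,\chi_D)=(|D|/\pi)^{(s+1)/2}\Gamma((s+1)/2)L(s,\chi_D)$. Comparing the logarithmic derivative of $\Lambda$ with the one coming from its Hadamard product $\Lambda(s,\chi_D)=e^{A+Bs}\prod_\rho(1-s/\rho)e^{s/\rho}$, and using the functional equation $\Lambda(s,\chi_D)=W\Lambda(1-s,\chi_D)$ to identify $B=-\frac{\Lambda'}{\Lambda}(1,\chi_D)$, I arrive, after the zero-symmetry $\rho\leftrightarrow 1-\bar\rho$, at the identity
$$\CmL_D=\tfrac{1}{2}\log\pi+\tfrac{1}{2}\gamma_{\mathrm E}+\sum_\rho\mathrm{Re}\frac{1}{1-\rho},$$
with $\rho=\beta+i\gamma$ ranging over non-trivial zeros. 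Each term $\mathrm{Re}\frac{1}{1-\rho}=\frac{1-\beta}{(1-\beta)^2+\gamma^2}$ is non-negative because $\beta<1$, so the problem has been reduced to bounding the sum from below.

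The remaining task is to show that the zero-sum exceeds $\tfrac13\log|D|$ for $|D|$ large. Restricting to zeros with $\beta\le 1/2$—at least half of all zeros by the functional-equation symmetry—each contribution is at least $\frac{1/2}{1/4+\gamma^2}$. Applying the Riemann--von Mangoldt formula $N(T,\chi_D)=\tfrac{T}{\pi}\log\tfrac{|D|T}{2\pi e}+O(\log(|D|T))$ and partial summation against the weight $(1/4+\gamma^2)^{-1}$ yields the asymptotic $\sum_\rho(1/4+\gamma^2)^{-1}\sim\log|D|$, hence $\CmL_D\ge c\log|D|+O(1)$ for an explicit $c>0$. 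The main obstacle is purely quantitative: the Weyl-law error $O(\log|D|T)$ is of the same order as the main term for bounded $T$, so a careful truncation—or, more robustly, the Burgess subconvexity bound $L(\tfrac12+it,\chi_D)\ll_\epsilon(|D|(1+|t|))^{1/4-\delta+\epsilon}$ fed through a Jensen-type zero-counting argument in a small disc near $s=1$—is required to push the constant $c$ past the threshold $1/3$ for $|D|$ sufficiently large.
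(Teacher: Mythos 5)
Your starting identity is correct and is indeed the natural first step: with $\Lambda(s,\chi_D)=(|D|/\pi)^{(s+1)/2}\Gamma(\frac{s+1}{2})L(s,\chi_D)$ one gets $\CmL_D=\frac12\log\pi+\frac{\gamma}{2}+\sum_\rho\mathrm{Re}\frac{1}{1-\rho}$ with every term of the zero-sum nonnegative, and this is consistent with the hint given here (the paper itself does not prove the lemma: it quotes Proposition 3.2 of the reference \og Heegner points and Eisenstein series\fg, attributing it to Burgess or to a uniform Weyl law). The problem is that everything after the identity — which is where the whole content of the lemma lies — is either incorrect or missing. First, your pointwise bound is false for low-lying zeros: for $\beta\le\frac12$ one has $\frac{1-\beta}{(1-\beta)^2+\gamma^2}\ge\frac{1/2}{1/4+\gamma^2}$ only when $\gamma^2\ge\frac12$; for $\beta$ near $0$ and $\gamma$ small the left side is about $\frac{1}{1+\gamma^2}$, which is smaller.

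Second, and decisively, the scheme \og keep only the zeros with $\beta\le\frac12$ and count them with the Weyl law\fg\ has a structural ceiling of $\frac14\log|D|$, strictly below the claimed $\frac13\log|D|$, even if all error terms were perfect. Discarding the partner $1-\bar\rho$ costs a factor $2$: grouping zeros into pairs $\{\beta+i\gamma,\,1-\beta+i\gamma\}$, the combined contribution is only $\ge\frac{1}{1+\gamma^2}$, with near-equality when the pair hugs $\sigma=0,1$ — a configuration the classical zero-free region does not exclude, since a zero at distance $\asymp 1/\log|D|$ from $\sigma=1$ contributes $o(1)$ while its partner contributes only $\frac{1}{1+\gamma^2}$. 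Plugging this worst case into the zero count gives $\approx\frac14\log|D|$, and your own restricted sum (half the zeros, weight $\frac{1/2}{1/4+\gamma^2}$, total $\sim\log|D|$) likewise tops out at $\frac14\log|D|$. So positivity plus vertical zero-counting alone cannot reach $\frac13$: one must in addition bound, with an explicit constant, the number of low-lying zeros lying near $\sigma=1$ (equivalently $\sigma=0$); this is exactly where Burgess has to be used (via a Jensen/Littlewood-type count of zeros in discs around $s=1$, or equivalently a bound on $\arg L$ giving a Weyl law with controlled error constant), and your proposal only names this step as \og required\fg\ without carrying it out. Relatedly, as you yourself observe, the Riemann--von Mangoldt error $O(\log(|D|T))$ is of the same size as the main term at bounded height, precisely where the weight $(1/4+\gamma^2)^{-1}$ concentrates the sum, so the asymptotic you invoke does not follow from the Weyl law with an unspecified error constant. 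As written, the argument proves only $\CmL_D\ge O(1)$, or at best $\ge(c+o(1))\log|D|$ with $c\le\frac14$; the passage to the constant $\frac13$ is the actual substance of the cited Proposition 3.2 and is missing here.
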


Le lemme suivant est classique et découle de la formule de période de Hecke et de la formule limite de Kronecker, voir~\cite{Temp:Eisenstein}*{Section 2}:
\begin{lemma}\label{lem:heegnerpointe} Soit $\eta(z)=q^{1/24}\prod\limits^\infty_{n=1}(1-q^n)$ la fonction $\eta$ de Dedekind. 
\begin{equation}
 \frac{-1}{h(D)}
 \sum_{\SmA\in\MCl_D}
 \log 
 \pabs{\Mim z^\SmA_D \cdot \eta(z^\SmA_D)^4}
 =
 \CmL_D+\log 2-\gamma.
\end{equation}
En particulier on a
\begin{equation}
 \frac{1}{h(D)}
 \sum_{\SmA\in\MCl_D}
 \Mht z^\SmA_D
 = \frac{3}{\pi}\CmL_D +O(1)
\end{equation}
où, pour un point $z\in \FmH$ on pose $\Mht z:=\max\limits_{\gamma \in SL_2(\BmZ)} \Mim \gamma z$.
\end{lemma}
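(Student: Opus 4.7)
\emph{Plan.} On déduirait les deux identités de la première formule limite de Kronecker appliquée à la décomposition de $\zeta_K(s)=\zeta(s)L(s,\chi_D)$ sur le groupe des classes. Le point de départ est l'identité de Hecke--Epstein
\begin{equation*}
\sum_{\SmA\in\MCl_D} E(z^\SmA_D,s) = \frac{w_K|D|^{s/2}}{2^s}\zeta(s)L(s,\chi_D),
\end{equation*}
où $E(z,s)$ est la série d'Eisenstein non-holomorphe standard pour $SL_2(\BmZ)$. Cette identité se démontre en écrivant chaque fonction zêta partielle $\zeta_K(s,[\Fma])$ sous la forme $w_K^{-1}N\Fma^s\sum_{\alpha\in\Fma\setminus\{0\}}|\alpha|^{-2s}$, en identifiant $\Fma$ au réseau $\BmZ+\BmZ z_\Fma$ à une homothétie près, et en utilisant la normalisation $\Mim z_\Fma=\sqrt{|D|}/(2N\Fma)$.

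On développerait ensuite les deux membres au voisinage de $s=1$ via la formule de Kronecker
\begin{equation*}
E(z,s) = \frac{\pi}{s-1} + 2\pi\bigl(\gamma-\log 2-\log(\sqrt{y}|\eta(z)|^2)\bigr) + O(s-1).
\end{equation*}
Les pôles se correspondent automatiquement grâce à la formule analytique du nombre de classes $L(1,\chi_D) = 2\pi h(D)/(w_K\sqrt{|D|})$. En identifiant les termes constants, avec $\zeta(s)=(s-1)^{-1}+\gamma+O(s-1)$ et le développement élémentaire de $|D|^{s/2}/2^s$, le terme constant du second membre se réduit à $\pi h(D)(\CmL_D+\gamma-\log 2)$, tandis que celui du premier membre vaut $2\pi h(D)(\gamma-\log 2)-\pi\sum_\SmA\log(\Mim z^\SmA_D\cdot|\eta(z^\SmA_D)|^4)$. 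La simplification donne la première identité.

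Pour la seconde identité, on utiliserait la $SL_2(\BmZ)$-invariance de la fonction $y|\eta(z)|^4$ afin d'évaluer le logarithme au représentant réduit, où $\Mim = \Mht$. Le développement de Fourier $\log|\eta(z)|^4 = -\pi y/3 + O(e^{-2\pi y})$, uniforme pour $y\ge\sqrt{3}/2$, fournit alors
\begin{equation*}
-\log(\Mim z^\SmA_D\cdot|\eta(z^\SmA_D)|^4) = \frac{\pi}{3}\Mht z^\SmA_D - \log\Mht z^\SmA_D + O(1)
\end{equation*}
uniformément en $\SmA$. En moyennant et en combinant avec la première identité, la seconde se ramènerait à la borne $\frac{1}{h(D)}\sum_\SmA\log\Mht z^\SmA_D = O(1)$.

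Le seul obstacle réside précisément dans cette dernière borne. L'inégalité de Jensen donne $\frac{1}{h(D)}\sum_\SmA\log\Mht z^\SmA_D \le \log\bigl(\frac{1}{h(D)}\sum_\SmA\Mht z^\SmA_D\bigr)$, ce qui, par un amorçage, ne fournit directement qu'un contrôle en $O(\log\CmL_D)$---déjà suffisant pour l'utilisation qui en est faite au \S\ref{sec:Heegner:burgess}. L'énoncé précis en $O(1)$ demanderait un raffinement sur la distribution asymptotique des premiers coefficients $a_\SmA$ des formes réduites de discriminant $D$ (en rappelant que $\Mht z^\SmA_D = \sqrt{|D|}/(2a_\SmA)$), que l'on peut obtenir soit à partir d'une version qualitative du théorème d'équidistribution de Duke (\S\ref{sec:general:Duke}), soit par un comptage direct sur les formes binaires réduites.
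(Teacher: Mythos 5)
Votre démonstration de la première identité est correcte et emprunte exactement la voie indiquée par l'article (\og formule de période de Hecke et formule limite de Kronecker\fg, renvoyant à \cite{Temp:Eisenstein}*{Section 2}). La factorisation $\sum_\SmA E(z^\SmA_D,s) = w_K|D|^{s/2}2^{-s}\zeta(s)L(s,\chi_D)$, l'identification des résidus par la formule analytique du nombre de classes, et le calcul des termes constants redonnent bien $\CmL_D+\log 2-\gamma$. (Une imprécision minime : l'écriture $\zeta_K(s,[\Fma]) = w_K^{-1}N\Fma^s\sum_{\alpha\in\Fma\setminus\{0\}}|\alpha|^{-2s}$ identifie en fait la classe $[\Fma^{-1}]$, mais cela disparaît dans la somme sur $\MCl_D$.)

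La lacune que vous signalez pour la seconde identité est réelle. L'invariance de $y|\eta(z)|^4$ et le développement de Fourier de $\log|\eta|$ ramènent bien l'énoncé à $\frac{1}{h(D)}\sum_\SmA\log\Mht z^\SmA_D = O(1)$, et Jensen (avec amorçage) ne donne que $O(\log\CmL_D)$. Deux réserves sur vos pistes : une version \emph{qualitative} du théorème de Duke ne suffit pas, car $\log\Mht z$ est non bornée sur le domaine fondamental --- il faudrait la version quantitative à perte de puissance, tronquer, et contrôler la queue ; et le \og comptage direct \fg revient à montrer $\frac{1}{h(D)}\sum_\SmA\log a_\SmA = \frac{1}{2}\log|D| + O(1)$, ce qui exige une estimation avec terme d'erreur uniforme en $D$ (de type Pólya--Vinogradov ou Burgess) pour le nombre de formes réduites de premier coefficient $\le A$. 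L'ingrédient supplémentaire existe, mais n'est pas gratuit, et c'est précisément ce que l'article délègue à \cite{Temp:Eisenstein}. Vous notez à juste titre que la borne affaiblie $O(\log\CmL_D)$ suffit pour l'usage fait au \S\ref{sec:Heegner:height}, puisque la conclusion~\eqref{minoration:heegner} n'est que $\gg\log|D|$ et $\log\CmL_D\ll\log\log|D|$.
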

En fait on aura besoin de la version en niveau $N$ suivante (la démonstration est laissée au lecteur):
\begin{equation}\label{heegnerpointe}
  \frac{1}{h(D)}
 \sum_{\SmA\in\MCl_D}
 \Mht_N z^\SmA_D
 = \Mvol(X_0(N))^{-1}\CmL_D +O_N(1),
\end{equation}
où $\Mht_N z:=\max\limits_{\gamma \in \Gamma_0(N)} \Mim \gamma z$.

\subsection{Hauteurs.}\label{subsec:naive}\label{sec:Heegner:naive} Soit $\varrho:E_{/\BmQ}\hookrightarrow \BmP^2_\BmQ$ un plongement donné par une équation de Weierstrass (en particulier l'image de l'élément neutre par $\varrho$ est le point de coordonnées $[0:1:0]$). D'après le formalisme des hauteurs, on sait que
\begin{equation}\label{NT-naive}
  \widehat{h}(z)=h(\varrho(z))+O_E(1),
  \qtext{pour tout $z\in E(\overline{\BmQ})$.}
\end{equation}
Ici $h$ désigne la hauteur na\"ive sur $\BmP^2_{\BmQ}$, c'est-à-dire que pour un point $P=[x_1:x_2:x_3]\in \BmP^2(K)$ défini sur le corps de nombres $K$ on a:
\begin{equation}
  h(P):=\frac{1}{[K:\BmQ]}\sum_v \log \max(\pnorm{x_1}_v,\pnorm{x_2}_v,\pnorm{x_3}_v),
\end{equation}
où $v$ parcourt l'ensemble des places de $K$.

On fixe une place infinie $K\hookrightarrow \BmC$, de sorte que l'on a une inclusion $\BmP^2(K)\hookrightarrow \BmP^2(\BmC)$. Si l'extension $K/\BmQ$ est galoisienne, et que le point $P=[x_1:x_2:1]\in K^2 \subset \BmP^2(K)$ n'appartient pas à la droite à l'infini, on a:
\begin{equation}\label{minoration-height}
  h(P)\ge
  \frac{1}{[K:\BmQ]}
  \sum_{\sigma\in\MGal(K/\BmQ)}
  \log^+ \pnorm{P^\sigma},
\end{equation}
où $\pnorm{\cdot}$ est la norme euclidienne\footnote{$\pnorm{[x_1:x_2:1]}^2=x^2_1+x^2_2$} sur $\BmC^2$ et $\log^+(r):=\log \max(r,1)$ pour $r\in\BmR_+$.

\subsection{Hauteur na\"ive des points de Heegner.}
\label{sec:Heegner:height}
Lorsque $D$ est assez grand, il est clair que les points de Heegner $\varphi(z^\SmA_D)$ sont non nuls (cela découle du fait que le degré de $\varphi:X_\CmN\rightarrow E$ est borné quand $D\rightarrow -\infty$, tandis que le corps de définition de $z^\SmA_D$ est $H_D$ qui est de degré $2h(D)\rightarrow +\infty$). Dans une équation de Weierstrass, le seul point qui se situe sur la droite à l'infini est l'origine. On peut donc appliquer l'inégalité~\eqref{minoration-height} aux points de Heegner, ce qui donne avec~\eqref{NT-naive}:
\begin{equation}
  \widehat{h}(\varphi(z_D)) \ge 
  \frac{1}{h(D)}
  \sum_{\SmA\in \MCl_K} 
  \log^+ \pnorm{\varrho\circ \varphi(z^\SmA_D)} 
  + O_E(1).
\end{equation}
Reprenons quelques notations introduites au cours de la démonstration du Lemme~\ref{lem:singularitepointe}. Soit $\eta:\Gamma_0(N)\rightarrow \BmD$ l'application induite par $z\mapsto e^{2i\pi z}$, soit $A:=\{x+iy,\ 0\le x < 1, Y < y \le \infty\}$ et $B:=\eta(A)$. On choisit $Y$ suffisamment grand pour que $A$ soit inclus dans $X_0(N)(\BmC)$ et que $\eta|_A$ soit injective.

L'application composée $\varrho \circ \varphi \circ \eta|_A^{-1}:B\rightarrow \BmP^1(\BmC)$ est holomorphe et envoie $0$ sur le point $[0:1:0]$. En fait elle est étale au voisinage de $0$ de sorte que l'on a:
\begin{equation}
 \pnorm{\varrho \circ \varphi \circ \eta|_A^{-1}(q)} \gg_\varphi \pnorm{q}^{-1}, \qtext{pour tout $q \in B$.}
\end{equation}
On en déduit:
\begin{equation}
  \log \pnorm{\varrho\circ \varphi(z)} \ge 2\pi \cdot \Mim z + O_\varphi(1),
  \qtext{pour tout $z\in A$.}
\end{equation}
Avec l'asymptotique~\eqref{heegnerpointe} puis le Lemme~\ref{lem:LD:minoration} on obtient donc successivement:
\begin{equation}\label{minoration:heegner}
  \widehat{h}(\varphi(z_D)) \ge \frac{2\pi}{\Mvol(X_0(N))} \CmL_D +O_E(1) \gg_\varphi \log|D|.
\end{equation}

\begin{remark}\label{rem:pointes}
On pourrait améliorer le résultat précédent en considérant toutes les pointes de $X_0(N)$ dont l'image par $\varrho\circ \varphi$ est à l'infini. On peut vérifier que la contribution d'une telle pointe est identique à celle de la pointe $i\infty$ étudiée plus haut.
\end{remark}

\begin{remark}\label{rem:optimal}
L'estimée~\eqref{minoration:heegner} est proche du véritable ordre de grandeur. On montrera en effet dans~\cite{Temp:shifted} que $\widehat{h}(\varphi(z_D))$ est asymptotique à
 \begin{equation}
 24\frac{\deg(\varphi)}{[SL_2(\BmZ):\Gamma_0(N)]} \CmL_D
\end{equation}
lorsque $D\rightarrow -\infty$. Cette asymptotique est compatible avec la minoration~\eqref{minoration:heegner} et la Remarque~\ref{rem:pointes} puisque
\begin{equation}
 \pabs{%
 \{
 \text{pointes $\kappa$ tq $\varphi(\kappa)=0$}
 \}
 }\le \deg(\varphi).
\end{equation}
\end{remark}

\subsubsection*{Remerciements}
Je voudrais remercier Philippe Michel qui m'a proposé ce sujet de recherche. Je remercie également Akshay Venkatesh pour son invitation au Courant Institute en mai 2007 et toutes les observations qu'il a formulées, ainsi que Gérard Freixas, Amaury Thuillier et Thomas Vidick pour plusieurs discussions utiles.

\begin{bibdiv}
\begin{biblist}

\bib{AN08}{article}{
      author={Aflalo, E.},
      author={Nekovar, J.},
       title={Non-triviality of {CM} points in ring class field towers},
     note={available at \texttt{http://people.math.jussieu.fr/~nekovar/pu/ntcm.pdf}},
}

\bib{BD96}{article}{
      author={Bertolini, M.},
      author={Darmon, H.},
       title={Heegner points on {M}umford-{T}ate curves},
        date={1996},
        ISSN={0020-9910},
     journal={Invent. Math.},
      volume={126},
      number={3},
       pages={413\ndash 456},
}

\bib{BD07}{article}{
      author={Bertolini, M.},
      author={Darmon, H.},
       title={Hida families and rational points on elliptic curves},
        date={2007},
     journal={Invent. Math.},
      volume={168},
      number={2},
       pages={371\ndash 431},
}

\bib{Blom04}{article}{
      author={Blomer, Valentin},
       title={Non-vanishing of class group {$L$}-functions at the central
  point},
        date={2004},
        ISSN={0373-0956},
     journal={Ann. Inst. Fourier},
      volume={54},
      number={4},
       pages={831\ndash 847},
}

\bib{conf:complex-mult}{book}{
      author={Borel, A.},
      author={Chowla, S.},
      author={Herz, C.~S.},
      author={Iwasawa, K.},
      author={Serre, J.-P.},
       title={Seminar on complex multiplication},
      series={Seminar held at the Institute for Advanced Study, Princeton,
  N.J., 1957-58. Lecture Notes in Mathematics, No. 21},
   publisher={Springer-Verlag},
     address={Berlin},
        date={1966},
}

\bib{BC91}{article}{
      author={Boutot, J.-F.},
      author={Carayol, H.},
       title={Uniformisation {$p$}-adique des courbes de {S}himura: les
  th{\'e}or{\`e}mes de \v {C}erednik et de {D}rinfel\cprime d},
        ISSN={0303-1179},
     journal={Ast{\'e}risque},
      number={196-197},
       pages={7, 45\ndash 158 (1992)},
}

\bib{BCDT01}{article}{
      author={Breuil, Christophe},
      author={Conrad, Brian},
      author={Diamond, Fred},
      author={Taylor, Richard},
       title={On the modularity of elliptic curves over {$\mathbf{Q}$}: wild
  3-adic exercises},
        date={2001},
        ISSN={0894-0347},
     journal={J. Amer. Math. Soc.},
      volume={14},
      number={4},
       pages={843\ndash 939},
}

\bib{BP07}{article}{
   author={Buium, A.},
   author={Poonen, B.},
   title={Independence of points on elliptic curves arising from special
   points on modular and Shimura curves. I. Global results},
   journal={Duke Math. J.},
   volume={147},
   date={2009},
   number={1},
   pages={181--191},
}
%

\bib{BFH90}{article}{
      author={Bump, D.},
      author={Friedberg, S.},
      author={Hoffstein, J.},
       title={Nonvanishing theorems for {$L$}-functions of modular forms and
  their derivatives},
        date={1990},
        ISSN={0020-9910},
     journal={Invent. Math.},
      volume={102},
      number={3},
       pages={543\ndash 618},
}

\bib{Buzz97}{article}{
      author={Buzzard, Kevin},
       title={Integral models of certain {S}himura curves},
        date={1997},
        ISSN={0012-7094},
     journal={Duke Math. J.},
      volume={87},
      number={3},
       pages={591\ndash 612},
}

\bib{cong:Lfunc04:cogd}{incollection}{
      author={Cogdell, James~W.},
       title={Lectures on {$L$}-functions, converse theorems, and functoriality
  for {${\rm GL}\sb n$}},
        date={2004},
   booktitle={Lectures on automorphic {$L$}-functions},
      series={Fields Inst. Monogr.},
      volume={20},
   publisher={Amer. Math. Soc.},
     address={Providence, RI},
       pages={1\ndash 96},
}

\bib{Corn02}{article}{
      author={Cornut, C.},
       title={Mazur's conjecture on higher {H}eegner points},
        date={2002},
        ISSN={0020-9910},
     journal={Invent. Math.},
      volume={148},
      number={3},
       pages={495\ndash 523},
}

\bib{Cornut:cras}{article}{
      author={Cornut, C.},
       title={Non-trivialit\'e des points de {H}eegner},
        date={2002},
        ISSN={1631-073X},
     journal={C. R. Math. Acad. Sci. Paris},
      volume={334},
      number={12},
       pages={1039\ndash 1042},
}

\bib{CV05}{article}{
      author={Cornut, C.},
      author={Vatsal, V.},
       title={C{M} points and quaternion algebras},
        date={2005},
        ISSN={1431-0635},
     journal={Doc. Math.},
      volume={10},
       pages={263\ndash 309 (electronic)},
}

\bib{CV05:partII}{incollection}{
      author={Cornut, C.},
      author={Vatsal, V.},
       title={Nontriviality of {R}ankin-{S}elberg {$L$}-functions and {CM}
  points},
        date={2007},
   booktitle={{$L$}-functions and {G}alois representations, london math. soc.
  lecture note ser., vol. 320},
   publisher={Cambridge Univ. Press},
     address={Cambridge},
       pages={121\ndash 186},
}


\bib{Darm:integration}{article}{
      author={Darmon, Henri},
       title={Integration on {$\scr H\sb p\times\scr H$} and arithmetic
  applications},
        date={2001},
        ISSN={0003-486X},
     journal={Ann. of Math. (2)},
      volume={154},
      number={3},
       pages={589\ndash 639},
}

\bib{book:darm04}{book}{
      author={Darmon, Henri},
       title={Rational points on modular elliptic curves},
      series={CBMS Regional Conference Series in Mathematics},
   publisher={Published for the Conference Board of the Mathematical Sciences,
  Washington, DC},
        date={2004},
      volume={101},
        ISBN={0-8218-2868-1},
}

\bib{Darm:ICM}{incollection}{
      author={Darmon, Henri},
       title={Heegner points, {S}tark-{H}eegner points, and values of
  {$L$}-series},
        date={2006},
   booktitle={International {C}ongress of {M}athematicians. {V}ol. {II}},
   publisher={Eur. Math. Soc., Z\"urich},
       pages={313\ndash 345},
}

\bib{cong:AMS79:deli}{incollection}{
      author={Deligne, P.},
       title={Vari{\'e}t{\'e}s de {S}himura: interpr{\'e}tation modulaire, et
  techniques de construction de mod{\`e}les canoniques},
   booktitle={Automorphic forms, representations and {$L$}-functions ({P}roc.
  {S}ympos. {P}ure {M}ath., {O}regon {S}tate {U}niv., {C}orvallis, 1977, part
  2)},
      series={Proc. Sympos. Pure Math., XXXIII},
   publisher={Amer. Math. Soc.},
     address={Providence, R.I.},
       pages={247\ndash 289},
}

\bib{Drin73}{article}{
      author={Drinfel{\cprime}d, V.~G.},
       title={Two theorems on modular curves},
        date={1973},
        ISSN={0374-1990},
     journal={Funkcional. Anal. i Prilo\v zen.},
      volume={7},
      number={2},
       pages={83\ndash 84},
        note={Functional Anal. Appl. 7 (1973), 155--156},
}

\bib{DFI4}{article}{
      author={Duke, W.},
      author={Friedlander, J.},
      author={Iwaniec, H.},
       title={Class group {$L$}-functions},
        date={1995},
        ISSN={0012-7094},
     journal={Duke Math. J.},
      volume={79},
      number={1},
       pages={1\ndash 56},
}

\bib{Duke88}{article}{
      author={Duke, W.},
       title={Hyperbolic distribution problems and half-integral weight {M}aass
  forms},
        date={1988},
        ISSN={0020-9910},
     journal={Invent. Math.},
      volume={92},
      number={1},
       pages={73\ndash 90},
}

\bib{Eich55}{article}{
      author={Eichler, Martin},
       title={Zur {Z}ahlentheorie der {Q}uaternionen-{A}lgebren},
        ISSN={0075-4102},
     journal={J. Reine Angew. Math.},
      volume={195},
       pages={127\ndash 151 (1956)},
}

\bib{Falt83}{article}{
      author={Faltings, G.},
       title={Endlichkeitss{\"a}tze f{\"u}r abelsche {V}ariet{\"a}ten {\"u}ber
  {Z}ahlk{\"o}rpern},
        ISSN={0020-9910},
     journal={Invent. Math.},
      volume={73},
      number={3},
       pages={349\ndash 366},
        note={Erratum: Invent. Math., 75 (2) p.381.},
}

\bib{book:gelb:adel}{book}{
      author={Gelbart, S.},
       title={Automorphic forms on ad{\`e}le groups},
   publisher={Princeton University Press},
     address={Princeton, N.J.},
        date={1975},
}

\bib{GP91}{article}{
      author={Gross, B.},
      author={Prasad, D.},
       title={Test vectors for linear forms},
        date={1991},
        ISSN={0025-5831},
     journal={Math. Ann.},
      volume={291},
      number={2},
       pages={343\ndash 355},
}

\bib{GZ}{article}{
      author={Gross, B.},
      author={Zagier, D.},
       title={Heegner points and derivatives of {$L$}-series},
        date={1986},
        ISSN={0020-9910},
     journal={Invent. Math.},
      volume={84},
      number={2},
       pages={225\ndash 320},
}

\bib{Gros88}{article}{
      author={Gross, B.},
       title={Local orders, root numbers, and modular curves},
        date={1988},
        ISSN={0002-9327},
     journal={Amer. J. Math.},
      volume={110},
      number={6},
       pages={1153\ndash 1182},
}

\bib{HM06}{article}{
      author={Harcos, Gergely},
      author={Michel, Ph.},
       title={The subconvexity problem for {R}ankin-{S}elberg {$L$}-functions
  and equidistribution of {H}eegner points. {II}},
        date={2006},
        ISSN={0020-9910},
     journal={Invent. Math.},
      volume={163},
      number={3},
       pages={581\ndash 655},
}

\bib{HPS89}{article}{
      author={Hijikata, H.},
      author={Pizer, A.},
      author={Shemanske, T.},
       title={Orders in quaternion algebras},
        date={1989},
        ISSN={0075-4102},
     journal={J. Reine Angew. Math.},
      volume={394},
       pages={59\ndash 106},
}

\bib{Iwan90:vanishing}{article}{
      author={Iwaniec, H.},
       title={On the order of vanishing of modular {$L$}-functions at the
  critical point},
        date={1990},
        ISSN={0989-5558},
     journal={S\'em. Th\'eor. Nombres Bordeaux (2)},
      volume={2},
      number={2},
       pages={365\ndash 376},
}

\bib{IS00}{article}{
      author={Iwaniec, H.},
      author={Sarnak, P.},
       title={Perspectives on the analytic theory of {$L$}-functions},
        date={2000},
        ISSN={1016-443X},
     journal={Geom. Funct. Anal.},
      number={Special Volume, Part II},
       pages={705\ndash 741},
}

\bib{book:JL}{book}{
      author={Jacquet, H.},
      author={Langlands, R.~P.},
       title={Automorphic forms on {${\rm GL}(2)$}},
   publisher={Springer-Verlag},
     address={Berlin},
        date={1970},
}

\bib{book:JLII}{book}{
      author={Jacquet, H.},
       title={Automorphic forms on {${\rm GL}(2)$}. {P}art {II}},
   publisher={Springer-Verlag},
     address={Berlin},
        date={1972},
        note={Lecture Notes in Mathematics, Vol. 278},
}

\bib{book:KS}{book}{
      author={Katz, N.},
      author={Sarnak, P.},
       title={Random matrices, {F}robenius eigenvalues, and monodromy},
      series={American Mathematical Society Colloquium Publications},
   publisher={American Mathematical Society},
     address={Providence, RI},
        date={1999},
      volume={45},
        ISBN={0-8218-1017-0},
}

\bib{Kolyvagin:finiteness}{article}{
      author={Kolyvagin, V.~A.},
       title={Finiteness of {$E({\bf Q})$} and {SH{$(E,{\bf Q})$}} for a
  subclass of {W}eil curves},
        date={1988},
        ISSN={0373-2436},
     journal={Izv. Akad. Nauk SSSR Ser. Mat.},
      volume={52},
      number={3},
       pages={522\ndash 540, 670\ndash 671},
}

\bib{Kolyvagin:Mordell-Weil}{article}{
      author={Kolyvagin, V.~A.},
       title={The {M}ordell-{W}eil and {S}hafarevich-{T}ate groups for {W}eil
  elliptic curves},
        date={1988},
        ISSN={0373-2436},
     journal={Izv. Akad. Nauk SSSR Ser. Mat.},
      volume={52},
      number={6},
       pages={1154\ndash 1180, 1327},
}

\bib{KMV00}{article}{
      author={Kowalski, E.},
      author={Michel, Ph.},
      author={VanderKam, J.},
       title={Mollification of the fourth moment of automorphic {$L$}-functions
  and arithmetic applications},
        ISSN={0020-9910},
     journal={Invent. Math.},
      volume={142},
      number={1},
       pages={95\ndash 151},
       date={2000},
}

\bib{Mani72}{article}{
      author={Manin, Ju.~I.},
       title={Parabolic points and zeta functions of modular curves},
        date={1972},
        ISSN={0373-2436},
     journal={Izv. Akad. Nauk SSSR Ser. Mat.},
      volume={36},
       pages={19\ndash 66},
        note={Math. USSR-Izv. 6 (1972), 19--64},
}

\bib{bour:mazu}{incollection}{
      author={Mazur, B.},
       title={Courbes elliptiques et symboles modulaires},
   booktitle={S\'eminaire bourbaki, 24\`eme ann\'ee (1971/1972), exp. no. 414},
   publisher={Springer},
     address={Berlin},
       pages={277\ndash 294.},
}

 \bib{Mazu91}{article}{
       author={Mazur, B.},
        title={Number theory as gadfly},
         date={1991},
         ISSN={0002-9890},
      journal={Amer. Math. Monthly},
       volume={98},
       number={7},
        pages={593\ndash 610},
 }

\bib{MS74}{article}{
      author={Mazur, B.},
      author={Swinnerton-Dyer, P.},
       title={Arithmetic of {W}eil curves},
        date={1974},
        ISSN={0020-9910},
     journal={Invent. Math.},
      volume={25},
       pages={1\ndash 61},
}

\bib{cong:ICM06:MV}{incollection}{
      author={Michel, Ph.},
      author={Venkatesh, A.},
      date={2006},
       title={Equidistribution, {$L$}-functions and ergodic theory: on some
  problems of {Y}u.\ {L}innik},
   booktitle={International congress of mathematicians. vol. ii},
   publisher={Eur. Math. Soc., Z{\"u}rich},
       pages={421\ndash 457},
}

\bib{MV05}{inproceedings}{
      author={Michel, Ph.},
      author={Venkatesh, A.},
       title={Heegner points and non-vanishing of {R}ankin/{S}elberg
  {$L$}-functions},
        date={2007},
   booktitle={Analytic number theory},
      series={Clay Math. Proc.},
      volume={7},
   publisher={Amer. Math. Soc.},
     address={Providence, RI},
       pages={169\ndash 183},
}

\bib{MV09}{article}{
author={Michel, Ph. and Venkatesh, A.},
title={The subconvexity problem for $GL_{2}$},
eprint={arxiv:math/0903.3591},
}

\bib{Mich04}{article}{
      author={Michel, Ph.},
       title={The subconvexity problem for {R}ankin-{S}elberg {$L$}-functions
  and equidistribution of {H}eegner points},
        date={2004},
        ISSN={0003-486X},
     journal={Ann. of Math. (2)},
      volume={160},
      number={1},
       pages={185\ndash 236},
}

\bib{MR82}{article}{
      author={Montgomery, Hugh~L.},
      author={Rohrlich, David~E.},
       title={On the {$L$}-functions of canonical {H}ecke characters of
  imaginary quadratic fields. {II}},
        date={1982},
        ISSN={0012-7094},
     journal={Duke Math. J.},
      volume={49},
      number={4},
       pages={937\ndash 942},
}

\bib{Murty90:mean}{article}{
      author={Murty, M.~Ram},
      author={Murty, V.~Kumar},
       title={Mean values of derivatives of modular {$L$}-series},
        date={1991},
        ISSN={0003-486X},
     journal={Ann. of Math. (2)},
      volume={133},
      number={3},
       pages={447\ndash 475},
}

\bib{book:Nekovar:euler}{incollection}{
      author={Nekov{\'a}{\v{r}}, J.},
       title={The {E}uler system method for {CM} points on {S}himura curves},
        date={2007},
   booktitle={{$L$}-functions and {G}alois representations},
      series={London Math. Soc. Lecture Note Ser.},
      volume={320},
   publisher={Cambridge Univ. Press},
     address={Cambridge},
       pages={471\ndash 547},
}

\bib{book:Nekovar:asterisque}{article}{
   author={Nekov{\'a}{\v{r}}, J.},
   title={Selmer complexes},
   journal={Ast\'erisque},
   number={310},
   date={2006},
   pages={viii+559},
}

\bib{NS99}{article}{
      author={Nekov{\'a}{\v{r}}, Jan},
      author={Schappacher, Norbert},
       title={On the asymptotic behaviour of {H}eegner points},
        date={1999},
        ISSN={1300-0098},
     journal={Turkish J. Math.},
      volume={23},
      number={4},
       pages={549\ndash 556},
}

\bib{book:PR}{book}{
      author={Platonov, Vladimir},
      author={Rapinchuk, Andrei},
       title={Algebraic groups and number theory},
      series={Pure and Applied Mathematics},
   publisher={Academic Press Inc.},
     address={Boston, MA},
        date={1994},
      volume={139},
        ISBN={0-12-558180-7},
        note={Translated from the 1991 Russian original by Rachel Rowen},
}

\bib{Pras07}{article}{
      author={Prasad, Dipendra},
       title={Relating invariant linear form and local epsilon factors via
  global methods},
        date={2007},
        ISSN={0012-7094},
     journal={Duke Math. J.},
      volume={138},
      number={2},
       pages={233\ndash 261},
}

\bib{Ribe92}{incollection}{
      author={Ribet, Kenneth~A.},
       title={Abelian varieties over {${\bf Q}$} and modular forms},
        date={1992},
   booktitle={Algebra and topology 1992 ({T}aej\u on)},
   publisher={Korea Adv. Inst. Sci. Tech.},
     address={Taej\u on},
       pages={53\ndash 79},
}

\bib{RT08}{article}{
      author={Ricotta, G.},
      author={Templier, N.},
       title={Comportement asymptotique des hauteurs des points de {H}eegner},
       journal={to appear in Journal de Th\'eorie des Nombres Bordeaux},
     eprint={arXiv:0807.2930.},
}

\bib{RV05}{article}{
   author={Ricotta, Guillaume},
   author={Vidick, Thomas},
   title={Hauteur asymptotique des points de Heegner},
   language={French, with English summary},
   journal={Canad. J. Math.},
   volume={60},
   date={2008},
   number={6},
   pages={1406--1436},
}

\bib{Rohr80c}{article}{
      author={Rohrlich, David~E.},
       title={Galois conjugacy of unramified twists of {H}ecke characters},
        date={1980},
        ISSN={0012-7094},
     journal={Duke Math. J.},
      volume={47},
      number={3},
       pages={695\ndash 703},
}

\bib{Rohr80a}{article}{
      author={Rohrlich, David~E.},
       title={The nonvanishing of certain {H}ecke {$L$}-functions at the center
  of the critical strip},
        date={1980},
        ISSN={0012-7094},
     journal={Duke Math. J.},
      volume={47},
      number={1},
       pages={223\ndash 232},
}

\bib{Rohr80b}{article}{
      author={Rohrlich, David~E.},
       title={On the {$L$}-functions of canonical {H}ecke characters of
  imaginary quadratic fields},
        date={1980},
        ISSN={0012-7094},
     journal={Duke Math. J.},
      volume={47},
      number={3},
       pages={547\ndash 557},
}

\bib{RS07}{article}{
      author={Rosen, Michael},
      author={Silverman, Joseph~H.},
       title={On the independence of {H}eegner points associated to distinct
  quadratic imaginary fields},
        date={2007},
        ISSN={0022-314X},
     journal={J. Number Theory},
      volume={127},
      number={1},
       pages={10\ndash 36},
}

\bib{Sieg:classnumber}{article}{
      author={Siegel, C.-L.},
       title={\"{U}ber die {C}lassenzahl quadratischer {Z}ahlk\"orper.},
        date={1935},
     journal={Acta. Arith.},
      volume={1},
       pages={83\ndash 86},
        note={(= Ges. Abh. I :406-409, Springer Verlag, Berlin, 1966)},
}

\bib{book:Silv:elliptic}{book}{
      author={Silverman, Joseph~H.},
       title={The arithmetic of elliptic curves},
      series={Graduate Texts in Mathematics},
   publisher={Springer-Verlag},
     address={New York},
      volume={106},
        ISBN={0-387-96203-4},
        note={Corrected reprint of the 1986 original},
}

\bib{SUZ97}{article}{
      author={Szpiro, L.},
      author={Ullmo, E.},
      author={Zhang, S.},
       title={\'{E}quir\'epartition des petits points},
        date={1997},
        ISSN={0020-9910},
     journal={Invent. Math.},
      volume={127},
      number={2},
       pages={337\ndash 347},
}

\bib{Tate74}{article}{
      author={Tate, John~T.},
       title={The arithmetic of elliptic curves},
        date={1974},
        ISSN={0020-9910},
     journal={Invent. Math.},
      volume={23},
       pages={179\ndash 206},
}

\bib{TW95}{article}{
      author={Taylor, Richard},
      author={Wiles, Andrew},
       title={Ring-theoretic properties of certain {H}ecke algebras},
        date={1995},
        ISSN={0003-486X},
     journal={Ann. of Math. (2)},
      volume={141},
      number={3},
       pages={553\ndash 572},
}

\bib{Temp:Eisenstein}{article}{
      author={Templier, N.},
       title={Heegner points and {E}isenstein series},
        date={2007},
     eprint={arXiv:0808.1476},
       note={submitted (19 pp)},
}

\bib{Temp:cras}{article}{
   author={Templier, N.},
   title={Minoration de rangs de courbes elliptiques},
   language={French, with English and French summaries},
   journal={C. R. Math. Acad. Sci. Paris},
   volume={346},
   date={2008},
   number={23-24},
   pages={1225--1230},
}

\bib{Temp:shifted}{article}{
      author={Templier, N.},
       title={A non-split sum of coefficients of modular forms},
        date={2008},
     eprint={arXiv:0902.2496},
     note={submitted},
}

\bib{Tunn83}{article}{
      author={Tunnell, Jerrold~B.},
       title={Local {$\epsilon $}-factors and characters of {${\rm GL}(2)$}},
        ISSN={0002-9327},
        date={1983},
     journal={Amer. J. Math.},
      volume={105},
      number={6},
       pages={1277\ndash 1307},
}

\bib{Ullm98}{article}{
      author={Ullmo, Emmanuel},
       title={Positivit{\'e} et discr{\'e}tion des points alg{\'e}briques des
  courbes},
        date={1998},
        ISSN={0003-486X},
     journal={Ann. of Math. (2)},
      volume={147},
      number={1},
       pages={167\ndash 179},
}

\bib{Vats03}{article}{
      author={Vatsal, V.},
       title={Special values of anticyclotomic {$L$}-functions},
        date={2003},
        ISSN={0012-7094},
     journal={Duke Math. J.},
      volume={116},
      number={2},
       pages={219\ndash 261},
}

\bib{cong:heeg04:vats}{incollection}{
      author={Vatsal, V.},
       title={Special value formulae for {R}ankin {$L$}-functions},
        date={2004},
   booktitle={Heegner points and {R}ankin {L}-series},
      series={Math. Sci. Res. Inst. Publ.},
      volume={49},
   publisher={Cambridge Univ. Press},
     address={Cambridge},
       pages={165\ndash 190},
}

\bib{cong:ICM06:vats}{incollection}{
      author={Vatsal, V.},
       title={Special values of {$L$}-functions modulo {$p$}},
        date={2006},
   booktitle={International congress of mathematicians. vol. ii},
   publisher={Eur. Math. Soc., Z{\"u}rich},
       pages={501\ndash 514},
}

\bib{Wald85b}{article}{
      author={Waldspurger, J.-L.},
       title={Sur les valeurs de certaines fonctions {$L$} automorphes en leur
  centre de sym{\'e}trie},
        date={1985},
        ISSN={0010-437X},
     journal={Compositio Math.},
      volume={54},
      number={2},
       pages={173\ndash 242},
}

\bib{Wile95}{article}{
      author={Wiles, Andrew},
       title={Modular elliptic curves and {F}ermat's last theorem},
        date={1995},
        ISSN={0003-486X},
     journal={Ann. of Math. (2)},
      volume={141},
      number={3},
       pages={443\ndash 551},
}

\bib{bour:serre-conj}{article}{
      author={Wintenberger, Jean-Pierre},
       title={La conjecture de modularit\'e de {S}erre: le cas de conducteur
  (d'apr\`es {C}. {K}hare)},
        date={2007},
        ISSN={0303-1179},
     journal={Ast\'erisque},
      number={311},
       pages={Exp. No. 956, viii, 99\ndash 121},
        note={S{\'e}minaire Bourbaki. Vol. 2005/2006},
}

\bib{YZZ08}{article}{
      author={Yuan, X.},
      author={Zhang, S.-W.},
      author={Zhang, W.},
       title={Heights of {CM} points, {I} {G}ross-{Z}agier formula},
       date={2008},
     journal={Preprint (83 pp).},
        note={available at
  \texttt{http://www.math.columbia.edu/\~{}szhang/papers/gross-schoen.pdf}},
}

\bib{Zhan05}{article}{
      author={Zhang, Shou-Wu},
       title={Equidistribution of {CM}-points on quaternion {S}himura
  varieties},
     date={2005},
        ISSN={1073-7928},
     journal={Int. Math. Res. Not.},
      number={59},
       pages={3657\ndash 3689},
}

\bib{Zhan98}{article}{
      author={Zhang, Shou-Wu},
       title={Equidistribution of small points on abelian varieties},
        ISSN={0003-486X},
        date={1998},
     journal={Ann. of Math. (2)},
      volume={147},
      number={1},
       pages={159\ndash 165},
}

\bib{Zhan01:annals}{article}{
      author={Zhang, Shou-Wu},
       title={Heights of {H}eegner points on {S}himura curves},
        ISSN={0003-486X},
        date={2001},
     journal={Ann. of Math. (2)},
      volume={153},
      number={1},
       pages={27\ndash 147},
}

\bib{Zhan01b}{article}{
      author={Zhang, Shou-Wu},
       title={Gross-{Z}agier formula for {${\rm GL}\sb 2$}},
        date={2001},
        ISSN={1093-6106},
     journal={Asian J. Math.},
      volume={5},
      number={2},
       pages={183\ndash 290},
}

\end{biblist}
\end{bibdiv}

%
%

%
%
%



\end{document}